\theoremstyle{plain}
\newtheorem{thm}{Theorem}[section]
\newtheorem{lem}[thm]{Lemma}
\newtheorem{cor}[thm]{Corollary}
\newtheorem{conj}[thm]{Conjecture}
\newtheorem{ex}[thm]{Example}
\newtheorem{rem}[thm]{Remark}
\newcommand{\ve}[0]{\varepsilon}
\newcommand{\bbZ}{{\rm \rlap Z\kern 2.2pt Z}}
\newcommand{\Cov}{{\rm Cov}}
\newcommand{\Var}{\mathrm {Var}}
\newcommand{\Exp}{{\rm E}}
\newcommand{\Ex}{{\rm E}}
\newcommand{\convdistr}{\stackrel{\scriptstyle  d}{\to}}
\newcommand{\convweak}{\Rightarrow }
\newcommand{\convas}{\stackrel{\scriptstyle a.s.}{\to}}
\newcommand\T{\rule{0pt}{2.6ex}}
\newcommand\B{\rule[-1.2ex]{0pt}{0pt}}
\begin{document}

\title{Empirical process of residuals for regression models with long memory errors\protect}

\author{ Pawe{\l} Lorek \and Rafa{\l} Kulik }

\maketitle

\begin{abstract}
We consider the residual empirical process in random design
regression with long memory errors. We establish its limiting
behaviour, showing that its rates of convergence are different from
the rates of convergence for to the empirical process based on
(unobserved) errors. Also, we study a residual empirical process
with estimated parameters. Its asymptotic distribution can be used
to construct Kolmogorov-Smirnov, Cram\'{e}r-Smirnov-von Mises, or
other goodness-of-fit tests. Theoretical results are justified by
simulation studies.
\end{abstract}

\section{Introduction}
Consider a random design regression model,
\begin{equation}\label{eq-regression-randomdesign}
Y_i=m(X_i)+\varepsilon_i, \qquad i=1,\ldots,n ,
\end{equation}
where $\{X,X_i,i\ge 1\}$ is a stationary sequence of random
variables with a density $f=f_X$, independent of a centered,
stationary long memory error sequence
$\{\varepsilon,\varepsilon_i,-\infty<i<\infty\}$, with a
distribution $F_{\varepsilon}$ and density $f_{\varepsilon}$. The
goal of this paper is to study the asymptotic properties of the
empirical process of residuals,
$$
\hat K_n(x):=\sum_{i=1}^n \left(\mathbf{1}_{\{\hat\varepsilon_i\le
x\}}-F_{\varepsilon}(x)\right),
$$
where $\hat\varepsilon_i=Y_i-\hat m(X_i)$ and $\hat m(\cdot)$ is an estimator of the function $m(\cdot)$.\\

Residual-based inference is a standard tool in regression analysis.
With this in mind, several authors considered empirical process of
residuals in case of independent random variables or weakly dependent stationary time series,
see e.g. \cite{Boldin1982}, 
\cite{KoulOssiander1994}, \cite{Bai1994}, \cite{Schick2004}, \cite{Cheng2005}, \cite{Schick2009}, just to mention few.\\

As for regression models with long memory errors, in
\cite{ChanLing2008}, the authors obtained that in case of a
parametric regression, $m(x)=\beta_0+\beta_1 x$, with \textit{a
known intercept}, the limiting behaviour of $\hat K_n(\cdot)$ is
similar to the limiting behaviour of
$$
K_n(x):=\sum_{i=1}^n \left(\mathbf{1}_{\{\varepsilon_i\le
x\}}-F_{\varepsilon}(x)\right),
$$
in the sense that $\sigma_{n,1}^{-1}K_n(\cdot)$ and
$\sigma_{n,1}^{-1}\hat K_n(\cdot)$ converge weakly to, respectively,
$f_{\varepsilon}(x)Z_1$, where $Z_1$ is standard normal and
$\sigma_{n,1}$ is an appropriate scaling factor. However, if one
considers a parametric regression when both slope and intercept are
unknown, from the latter paper one can only conclude that
$$\sigma_{n,1}^{-1}\sup_{x\in\mathbb{R}}|\hat K_n(x)|=o_P(1).$$

To see intuitively why this is the case, consider for a moment a simple model 
$Y_i=\mu+\varepsilon_i$. Estimate $\mu$ by the sample mean $\bar Y$ and compute residuals $\hat\varepsilon_i=Y_i-\bar Y$. Then 
$$
\hat K_n(x)=K_n(x+\bar Y)+\left(F_{\varepsilon}(x+\bar Y)-F_{\varepsilon}(x)\right).
$$
From a general theory for empirical processes based on long memory random variables $\varepsilon_i$, $i\ge 1$, we conclude 
$$
\sigma_{n,1}^{-1}\sup_x\left|K_n(x)+f_{\varepsilon}(x)\sum_{i=1}^n\varepsilon_i\right|=o_P(1) ,
$$
see e.g. \cite{DehlingTaqqu}, \cite{HoHsing1996}, \cite{Wu2003}. On the other hand, using the Taylor's expansion, 
$$
\left(F_{\varepsilon}(x+\bar Y)-F_{\varepsilon}(x)\right)\approx f_{\varepsilon}(x) \sum_{i=1}^n\varepsilon_i .
$$
Therefore, the contribution of $\bar \varepsilon=\sum_{i=1}^n\varepsilon_i$ cancels out and asymptotic behaviour of $\hat K_n(x)$ cannot be concluded from that of $K_n(x)$. See \cite{Kulik2008b} and \cite{KoulSurgailis2010} for precise results along these lines. \\

The main goal of this paper is to establish a general theory on asymptotic behaviour
for $\hat K_n(\cdot)$. In particular, this theory is applied to the parametric regression and a nonparametric regression; the latter in a longer, arxiv version of the paper. We
will show in this paper, that convergence properties of $\hat
K_n(\cdot)$ may be completely different from the asymptotics of
$K_n(\cdot)$. To do this, we will establish a second order expansion
for $\hat K_n(\cdot)$ (see Theorems \ref{thm:main} and
\ref{thm:main-weak}).

The established results can be used, in principle, to test whether
the errors $\varepsilon_1,\ldots,\varepsilon_n$ are consistent with
a given distribution $F_\varepsilon{}$. If $F_{\varepsilon}$ belongs
to a one-parameter family
$\{F_{\varepsilon}(\cdot,{\theta}),\theta\in \mathbb{R} \}$, then
one needs to know the value of the parameter $\theta$. Therefore, we
discuss asymptotic properties of an empirical process of residuals,
when a parameter $\theta$ is estimated.
The appropriate limit theorems are established in Section \ref{sec:estimated}. Our theoretical results are confirmed by small simulation studies in Section \ref{sec:simulation}.\\

The results for empirical processes in Sections \ref{sec:parametric}
can be applied directly to establish
limiting behaviour of quantiles (see \cite[Section 5]{HoHsing1996}). Furthermore, in a spirit of
\cite[Section 3]{HoHsing1996}, our results should be applicable to the error density estimation. 
However, a precise proof requires at least third order expansion of the residual-based empirical process (see Section \ref{sec:density}). 
Finally, it would be interesting to establish
corresponding results in case of fixed-design regression.
\section{Preliminaries: LRD error sequence}\label{sec:errors-properties}
In the sequel, $F_U(\cdot)$, $f_U(\cdot)$ denote a distribution and
a density, respectively, of a given random variable $U$. Also, if
$U$ has finite mean, we denote $U^* =U-\Ex[U]$. \\

We shall consider the following assumption on the error sequence:
\begin{itemize}
\item[{\rm (E)}] $\varepsilon_i$, $i\ge 1$, is an infinite order moving average
$$
\varepsilon_i=\sum_{k=0}^{\infty}c_k\eta_{i-k},\qquad \mbox{\rm with
} c_0=1,
$$
where $\eta_i$, $-\infty<i<\infty$, is a sequence of centered
i.i.d.~random variables, independent of $X_i$, $i\ge 1$. We assume
that $\Exp[\varepsilon^4]<\infty$, $\Exp[\varepsilon_1^2]=1$, and
for some $\alpha_{\varepsilon}\in (0,1)$, $c_k\sim
k^{-(\alpha_{\varepsilon}+1)/2}L_0(k)$ as $k\to\infty$, where
$L_0(\cdot)$ is slowly varying at infinity.
\end{itemize}
Let
\begin{equation}\label{eq:enr}
\varepsilon_{n,r}=\sum_{i=1}^n\sum_{1\le
j_1<\cdots<j_r=1}^n\prod_{s=1}^rc_{j_s}\eta_{i-j_s}.
\end{equation}
In particular, $\varepsilon_{n,1}=\sum_{i=1}^n\varepsilon_{i}$ and
if $r\alpha_{\varepsilon}<1$,
\begin{equation}\label{eq:var-enr}
\sigma_{n,r}^2:=\Var (\varepsilon_{n,r})\sim
n^{2-r\alpha_{\varepsilon}}L_0^{2r}(n).
\end{equation}
From \cite{HoHsing1996} we know that for
$r<\alpha_{\varepsilon}^{-1}$, as $n\to\infty$,
\begin{equation}\label{eq:enr-distr}
\sigma_{n,r}^{-1}\varepsilon_{n,r}\convdistr Z_r,\qquad r=1,2,
\end{equation}
where $Z_r$ is a random variable which can be represented by
a multiple Wiener-It\^{o} integral. In particular, $Z_1$
is standard normal. Moreover, the random variables $Z_1,\ldots,Z_p$
are uncorrelated, see e.g. \cite[Eq. (1.22)]{KoulSurgailis1997}. We also note that the convergence in (\ref{eq:enr-distr}) holds jointly. 

Furthermore, let
$$
S_{n,p}(x)=\sum_{i=1}^n\left(\mathbf{1}_{\{\varepsilon_i\le
x\}}-F_{\varepsilon}(x)\right)+\sum_{r=1}^p(-1)^{r-1}F_{\varepsilon}^{(r)}(x)\varepsilon_{n,r}.
$$
Assume that $F_{\eta}(\cdot)$ is 5 times differentiable with
bounded, continuous and integrable derivatives. We note in passing
that these properties are transferable to $F_{\varepsilon}(\cdot)$.
Following \cite[Theorem 3]{Wu2003} and \cite[Theorem
2.2]{HoHsing1996} we conclude, in particular, that for
$\alpha_{\varepsilon}<1/2$,
\begin{equation}\label{eq:Wu}
\sigma_{n,2}^{-1}S_{n,1}(x)\convweak f_{\varepsilon}^{(1)}(x)Z_2,
\end{equation}
where $Z_2$ is the same random variable as in (\ref{eq:enr-distr}).
Otherwise, if $\alpha_{\varepsilon}>1/2$, then
\begin{equation}\label{eq:Wu-weak}
n^{-1/2}S_{n,1}(x)\convweak W_1(x),
\end{equation}
where $\{W_1(x),x\in\mathbb{R}\}$ is a Gaussian process and $\Rightarrow$ denotes weak convergence in $D[0,1]$.
Furthermore, for $\alpha_{\varepsilon}>1/3$,
\begin{equation}\label{eq:Wu-3}
\sigma_{n,2}^{-1}\sup_{x\in\mathbb{R}}|S_{n,2}(x)|\convas 0.
\end{equation}
The structure of this Gaussian process and
its covariance is given in a rather complicated form; see
\cite{Wu2003} for more details.
\section{Results}
Let
$$
\Delta_i:=\varepsilon_i-\hat\varepsilon_i=\varepsilon_i-(Y_i-\hat
m(X_i))=\hat m(X_i)-m(X_i), \qquad {\bf
\Delta}=(\Delta_1,\ldots,\Delta_n).
$$
\subsection{Empirical process of residuals: $\alpha_{\varepsilon}<1/2$}
The following result provides an uniform expansion of the process
$\hat K_n(\cdot)$ and forms a basis for further analysis.
\begin{thm}\label{thm:main}
Assume {\rm (E)} with $\alpha_{\varepsilon}<1/2$. Assume that
$F_{\eta}(\cdot)$ is 3 times differentiable with bounded, continuous
and integrable derivatives. Suppose that ${\bf\Delta}$ can be
written as $\Delta_0{\bf 1}+(\Delta_{01},\ldots,\Delta_{0n})$, where
\begin{equation}\label{eq:cond-1a}
\delta_n+\frac{n^2\delta_n}{\sigma_{n,2}^4}+\frac{n^2\delta_n^2}{\sigma_{n,2}^3}+\frac{n\delta_n}{\sigma_{n,2}^3}\to
0;
\end{equation}
\begin{itemize}
\item $\frac{1}{\delta_n}\Delta_0\stackrel{{\rm d}}{\to} V$, where $V$ is a nondegenerate random variable;
\item $|\Delta_{0i}|=o_P(\delta_n)$, uniformly in $i$.
\end{itemize}
Then
\begin{eqnarray*}
\lefteqn{\sup_{x\in\mathbb{R}}\left|
\hat K_n(x)-K_n(x)-f_{\varepsilon}(x)\sum_{i=1}^n\Delta_i-\frac{1}{2}f_{\varepsilon}^{(1)}(x)\sum_{i=1}^n\Delta_i^2+f_{\varepsilon}^{(1)}(x)\Delta_0 \varepsilon_{n,1}\right|}\\
&=&O_P(\delta_n^{1-\nu}\sigma_{n,2})+o_P\left(\delta_n\sigma_{n,1}\right)+O_P\left(\sum_{i=1}^n\Delta_i^3\right).\qquad\qquad\qquad\qquad\qquad\qquad
\end{eqnarray*}
\end{thm}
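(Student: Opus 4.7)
The plan is to start from the tautology
$$\hat K_n(x)-K_n(x)=\sum_{i=1}^n\bigl[\mathbf{1}_{\{\varepsilon_i\le x+\Delta_i\}}-\mathbf{1}_{\{\varepsilon_i\le x\}}\bigr]$$
and split into a drift part and a centred part by adding and subtracting $F_\varepsilon(x+\Delta_i)$:
$$\hat K_n(x)-K_n(x)=D_n(x)+T_n(x),\qquad D_n(x):=\sum_{i=1}^n\bigl[F_\varepsilon(x+\Delta_i)-F_\varepsilon(x)\bigr].$$
For $D_n$, a third-order Taylor expansion of $F_\varepsilon$ at $x$, legitimate since $F_\eta$ (and hence $F_\varepsilon$) has bounded derivatives up to order three, yields uniformly in $x$
$$D_n(x)=f_\varepsilon(x)\sum_{i=1}^n\Delta_i+\tfrac{1}{2}f_\varepsilon^{(1)}(x)\sum_{i=1}^n\Delta_i^2+O\bigl(\|f_\varepsilon^{(2)}\|_\infty\sum_{i=1}^n|\Delta_i|^3\bigr).$$
This step already produces the two explicit drift terms in the statement and the $O_P(\sum \Delta_i^3)$ remainder.

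For the genuinely stochastic piece $T_n(x)=\sum_i[(\mathbf{1}_{\{\varepsilon_i\le x+\Delta_i\}}-F_\varepsilon(x+\Delta_i))-(\mathbf{1}_{\{\varepsilon_i\le x\}}-F_\varepsilon(x))]$, I would invoke the decomposition $\Delta_i=\Delta_0+\Delta_{0i}$ to write
$$T_n(x)=\bigl[K_n(x+\Delta_0)-K_n(x)\bigr]+R_n(x),$$
where $R_n(x)$ collects the contributions of the perturbations $\Delta_{0i}$. In the bracketed difference I would substitute the Ho--Hsing/Wu expansion recorded in \eqref{eq:Wu-3}, namely $K_n(y)=-f_\varepsilon(y)\varepsilon_{n,1}+f_\varepsilon^{(1)}(y)\varepsilon_{n,2}+S_{n,2}(y)$ with $\sup_y|S_{n,2}(y)|=o(\sigma_{n,2})$ a.s., at $y=x+\Delta_0$ and $y=x$, and Taylor-expand $f_\varepsilon$ and $f_\varepsilon^{(1)}$ around $x$. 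The only term that does not cancel at leading order is $-\Delta_0 f_\varepsilon^{(1)}(x)\varepsilon_{n,1}$, whose relocation to the left-hand side explains the $+f_\varepsilon^{(1)}(x)\Delta_0\varepsilon_{n,1}$ in the theorem; the Taylor remainders are of order $O_P(\delta_n^2\sigma_{n,1})$ and $O_P(\delta_n\sigma_{n,2})$, which are respectively $o_P(\delta_n\sigma_{n,1})$ (since $\delta_n\to 0$ by \eqref{eq:cond-1a}) and absorbed into $O_P(\delta_n^{1-\nu}\sigma_{n,2})$.

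The hard part, and the real content of the theorem, is to bound $\sup_x|S_{n,2}(x+\Delta_0)-S_{n,2}(x)|$ and $\sup_x|R_n(x)|$ at a rate strictly better than the trivial $o_P(\sigma_{n,2})$. Because $\Delta_0=O_P(\delta_n)$ with $\delta_n\to 0$, the naive estimate $2\sup_y|S_{n,2}(y)|=o_P(\sigma_{n,2})$ wastes the smallness of the shift; instead one must establish a H\"older-type modulus of continuity for $S_{n,2}$ on an interval of length $O_P(\delta_n)$, which is where the extra factor $\delta_n^{1-\nu}$ in the error bound comes from. The residual $R_n(x)$ is a centred empirical increment over random intervals of width $o_P(\delta_n)$ (by the uniform assumption $\sup_i|\Delta_{0i}|=o_P(\delta_n)$), and is controlled by the same modulus-of-continuity argument combined with boundedness of $f_\varepsilon$ and $f_\varepsilon^{(1)}$. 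The delicate balance of the conditions packed into \eqref{eq:cond-1a} is exactly what guarantees that every sub-leading term arising from these Taylor and modulus-of-continuity steps is absorbed in $O_P(\delta_n^{1-\nu}\sigma_{n,2})+o_P(\delta_n\sigma_{n,1})+O_P(\sum_i\Delta_i^3)$; verifying this bookkeeping, and producing the H\"older estimate for $S_{n,2}$, is where I expect essentially all the technical work of the proof to lie.
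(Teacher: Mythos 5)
Your treatment of the drift term coincides with the paper's (third--order Taylor expansion of $\sum_i(F_\varepsilon(x+\Delta_i)-F_\varepsilon(x))$ in the decomposition \eqref{eq:basic-decomposition}, \eqref{eq:decomposition}), and you have correctly located where the real work lies; but the two estimates you defer \emph{are} the theorem, and the route you sketch for them does not go through. First, the remainder $R_n(x)=\sum_i\bigl[(\mathbf{1}_{\{\varepsilon_i\le x+\Delta_i\}}-F_\varepsilon(x+\Delta_i))-(\mathbf{1}_{\{\varepsilon_i\le x+\Delta_0\}}-F_\varepsilon(x+\Delta_0))\bigr]$ is \emph{not} an increment of a single process: the shifts $\Delta_{0i}$ vary with $i$, so no modulus-of-continuity statement about $K_n$ or $S_{n,2}$ applies to it. The natural repair --- sandwiching each indicator between $\mathbf{1}_{\{\varepsilon_i\le x+\Delta_0\pm\max_j|\Delta_{0j}|\}}$ by monotonicity --- costs drift corrections of size $n\|f_\varepsilon\|_\infty\max_j|\Delta_{0j}|=o_P(n\delta_n)$, which in the intended application ($\delta_n=\sigma_{n,1}/n$) is only $o_P(\sigma_{n,1})$, far larger than the claimed $o_P(\delta_n\sigma_{n,1})\sim o_P(\sigma_{n,2})$. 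To beat this one must exploit the centering, which is exactly why the paper splits $Z_n(x;{\bf u})$ in \eqref{eq:decomp-Zn} into a martingale part $M_n$ (conditionally centered given ${\cal H}_{i-1}$, bounded uniformly over the whole class \eqref{eq:cond-2} of admissible ${\bf u}$ by Rosenthal's inequality plus chaining over the grid $x_r=r/\sigma_{n,2}$; Lemmas \ref{lem:mtg}--\ref{lem:mtg-2} and condition \eqref{eq:cond-1a}) and an LRD part $N_n$, in which the $\Delta_{0i}$-perturbation contributes $o(\delta_n)\sum_i\bigl(f_\eta(x+u_0-\xi_i)-\Exp f_\eta(x+u_0-\xi_i)\bigr)=o_P(\delta_n\sigma_{n,1})$ --- a \emph{centered} sum of smooth LRD functionals, of order $\sigma_{n,1}$ rather than $n$.

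Second, your main term requires a H\"older-type modulus $\sup_x|S_{n,2}(x+\Delta_0)-S_{n,2}(x)|=O_P(\delta_n^{1-\nu}\sigma_{n,2})$. No such estimate is available (and the expansion \eqref{eq:Wu-3} you would substitute needs five derivatives of $F_\eta$ and $\alpha_\varepsilon>1/3$, more than Theorem \ref{thm:main} assumes); since $S_{n,2}$ contains the indicator sums, proving such a modulus would itself require the martingale/LRD decomposition and chaining, so it is not a shortcut. The paper sidesteps it entirely: in Lemma \ref{lem:LRD} it writes $N_n(x;u_0{\bf 1})=\int\tilde S_{n,1}(v)\bigl(f_\eta(x+u_0-v)-f_\eta(x-v)\bigr)\,dv-f_\varepsilon^{(1)}(x)u_0\xi_{n,1}+O(u_0^2)\xi_{n,1}$, so that the smoothing by the innovation distribution converts the increment into an integral against a difference of densities; the factor $|u_0|\le\delta_n^{1-\nu}$ is then extracted by the mean value theorem and only the crude sup bound $\sup_v|\tilde S_{n,1}(v)|=O_P(\sigma_{n,2}\vee\sqrt n)$ is needed, with $\xi_{n,1}$ traded for $\varepsilon_{n,1}$ at cost $o_P(\sigma_{n,2})$. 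Your identification of the leading term $-f_\varepsilon^{(1)}(x)\Delta_0\varepsilon_{n,1}$ and of the error budget is correct, but the two technical estimates on which your argument rests are neither supplied nor obtainable by the modulus-of-continuity route as proposed.
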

In principle, this result is very similar to \cite[Theorem 2.1]{ChanLing2008}. However, we provide $o_P(\cdot)$ rates of the approximation. This is crucial to establish limit theorems for the process $\hat K_n(\cdot)$.\\

To have some intuition, let us write
\begin{eqnarray}\label{eq:basic-decomposition}
\lefteqn{\hat K_n(x)-K_n(x)
= \sum_{i=1}^n\mathbf{1}_{\{\varepsilon_i\le x+\Delta_i\}}-\sum_{i=1}^n\mathbf{1}_{\{\varepsilon_i\le x\}} }\nonumber\\
&=&  \sum_{i=1}^n\left(\mathbf{1}_{\{\varepsilon_i\le
x+\Delta_i\}}-F_{\varepsilon}(x+\Delta_i)\right) -
\sum_{i=1}^n\left(\mathbf{1}_{\{\varepsilon_i\le
x\}}-F_{\varepsilon}(x)\right)\nonumber\\
&&\qquad\qquad+\sum_{i=1}^n\left(F_{\varepsilon}(x+\Delta_i)-F_{\varepsilon}(x)\right).
\end{eqnarray}
From Theorem \ref{thm:main} and (\ref{eq:Wu-3}) we conclude for
$\alpha_{\varepsilon}<1/2$ that, uniformly in $x$,
\begin{eqnarray}\label{eq:conclusion}
\lefteqn{\hat K_n(x)=K_n(x)+f_{\varepsilon}(x)\sum_{i=1}^n\Delta_i+\frac{1}{2}f_{\varepsilon}'(x)\sum_{i=1}^n\Delta_i^2-f_{\varepsilon}^{(1)}(x)\Delta_0 \varepsilon_{n,1}}\nonumber\\
&&+o_P(\sigma_{n,2}+\delta_n\sigma_{n,1})+O_P\left(\sum_{i=1}^n\Delta_i^3\right)\nonumber\\
&=& -f_{\varepsilon}(x)\sum_{i=1}^n\varepsilon_i+f_{\varepsilon}(x)\sum_{i=1}^n\Delta_i+f_{\varepsilon}^{(1)}(x)\varepsilon_{n,2}+\frac{1}{2}f_{\varepsilon}^{(1)}(x)\sum_{i=1}^n\Delta_i^2-f_{\varepsilon}^{(1)}(x)\Delta_0 \varepsilon_{n,1}\nonumber\\
&&\qquad\qquad+o_P(\sigma_{n,2}+\delta_n\sigma_{n,1})+O_P\left(\sum_{i=1}^n\Delta_i^3\right).
\end{eqnarray}
We note in passing that in order to obtain the above expansion via
(\ref{eq:Wu-3}) one has to assume that $F_{\eta}(\cdot)$ is 5 times
differentiable. 
\\

As we will see below (Section \ref{sec:parametric}), it may happen that the first order
contribution
$$-f_{\varepsilon}(x)\sum_{i=1}^n\varepsilon_i+f_{\varepsilon}(x)\sum_{i=1}^n\Delta_i$$
is negligible. In other words, the rates of convergence of $\hat
K_n(\cdot)$ \textit{will be different} from those for $K_n(\cdot)$.
The rates of convergence will be determined by the second order term
$$
f_{\varepsilon}^{(1)}(x)\varepsilon_{n,2}+\frac{1}{2}f_{\varepsilon}^{(1)}(x)\sum_{i=1}^n\Delta_i^2-f_{\varepsilon}^{(1)}(x)\Delta_0
\varepsilon_{n,1}.
$$
\subsection{Empirical process of residuals: $\alpha_{\varepsilon}>1/2$}
Let $\xi_i=\varepsilon_i-\eta_i$. Define $\xi_{n,r}$ in the
analogous way as $\varepsilon_{n,r}$; see (\ref{eq:enr}).

\begin{thm}\label{thm:main-weak}
Assume {\rm (E)} with $\alpha_{\varepsilon}>1/2$. Assume that
$F_{\eta}(\cdot)$ is 3 times differentiable with bounded, continuous
and integrable derivatives. Suppose that ${\bf\Delta}$ can be
written as $\Delta_0{\bf 1}+(\Delta_{01},\ldots,\Delta_{0n})$, where
\begin{equation}\label{eq:cond-1b}
\delta_n+\sqrt{n}\delta_n^2\to 0;
\end{equation}
\begin{itemize}
\item $\frac{1}{\delta_n}\Delta_0\stackrel{{\rm d}}{\to} V$, where $V$ is a nondegenerate random variable;
\item $|\Delta_{0i}|=o_P(\delta_n)$, uniformly in $i$.
\end{itemize}
Then
\begin{eqnarray*}
\hat K_n(x)& = & K_n(x)+f_{\varepsilon}(x)\sum_{i=1}^n\varepsilon_i +\left(f_{\varepsilon}(x)\sum_{i=1}^n\Delta_i-f_{\varepsilon}(x)\sum_{i=1}^n\varepsilon_i-f^{(1)}_{\varepsilon}(x)\Delta_{0}\xi_{n,1}\right)   \nonumber\\
&& +O\left(\sum_{i=1}^n\Delta_i^2\right)+o_P(
\sqrt{n})+O_P\left(\delta_n\sigma_{n,1}\right)
\end{eqnarray*}
where
$n^{-1/2}\left(K_n(x)+f_{\varepsilon}(x)\sum_{i=1}^n\varepsilon_i\right)$
converges weakly to $W_1(x)$ from {\rm (\ref{eq:Wu-weak})}.
\end{thm}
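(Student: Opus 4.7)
The plan is to follow the decomposition-and-expansion strategy of Theorem \ref{thm:main}, replacing the second-order Wu expansion (\ref{eq:Wu-3}) with the first-order weak-convergence statement (\ref{eq:Wu-weak}). Start from the elementary identity (\ref{eq:basic-decomposition}),
\begin{equation*}
\hat K_n(x) - K_n(x) = A_n(x) + B_n(x),
\end{equation*}
where $B_n(x) = \sum_{i=1}^n (F_{\varepsilon}(x + \Delta_i) - F_{\varepsilon}(x))$ and $A_n(x)$ is the corresponding centered fluctuation. A three-term Taylor expansion of $F_{\varepsilon}$ gives $B_n(x) = f_{\varepsilon}(x)\sum_i \Delta_i + O(\sum_i \Delta_i^2) + O_P(n\delta_n^3)$; the cubic remainder is $o_P(\sqrt n)$ because $\sqrt n \delta_n^2 \to 0$ forces $n\delta_n^3 = \sqrt n\delta_n \cdot \sqrt n \delta_n^2 = o(\sqrt n)$.

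For $A_n(x)$, write $\Delta_i = \Delta_0 + \Delta_{0i}$. The key intermediate step is the stochastic equicontinuity bound
\begin{equation*}
\sup_{x \in \mathbb{R}} \bigl| A_n(x) - (K_n(x + \Delta_0) - K_n(x)) \bigr| = o_P(\sqrt n).
\end{equation*}
A naive Lipschitz estimate gives only $O_P(n\delta_n)$, which is $o(\sqrt n)$ just when $\delta_n = o(n^{-1/2})$, a condition strictly stronger than (\ref{eq:cond-1b}). The extra $\sqrt{\delta_n}$ saving is produced by splitting the indicator into a martingale part and a $\xi$-measurable part,
\begin{equation*}
\mathbf{1}_{\{\varepsilon_i \le y\}} - F_{\varepsilon}(y) = \bigl[\mathbf{1}_{\{\varepsilon_i \le y\}} - F_{\eta}(y - \xi_i)\bigr] + \bigl[F_{\eta}(y - \xi_i) - F_{\varepsilon}(y)\bigr].
\end{equation*}
Conditional on the past innovations, the first bracket is a sum of independent centered terms whose total conditional variance over the shift from $x+\Delta_0$ to $x+\Delta_0+\Delta_{0i}$ is at most $\|f_\eta\|_\infty n \sup_i |\Delta_{0i}| = o_P(n\delta_n)$, hence is $o_P(\sqrt n)$ uniformly in $x$ after a short chaining argument over an $O(\delta_n)$-grid. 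The $\xi$-measurable remainder is handled by Taylor-expanding $F_{\eta}(\cdot - \xi_i)$ and produces contributions of order $O_P(\delta_n \sigma_{n,1})$ and $o_P(\sqrt n)$.

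Once $A_n(x)$ is replaced by $K_n(x + \Delta_0) - K_n(x)$, a second application of (\ref{eq:Wu-weak}) together with a Taylor expansion of $f_{\varepsilon}$ yields
\begin{equation*}
K_n(x + \Delta_0) - K_n(x) = -f_{\varepsilon}^{(1)}(x) \Delta_0 \, \varepsilon_{n,1} + O_P(\delta_n^2 \sigma_{n,1}) + o_P(\sqrt n),
\end{equation*}
where the $o_P(\sqrt n)$ error comes from the modulus of continuity of the Gaussian limit $W_1$ evaluated at the random shift $\Delta_0 = O_P(\delta_n)$. Using $\varepsilon_{n,1} - \xi_{n,1} = \sum_i \eta_i = O_P(\sqrt n)$, one replaces $\varepsilon_{n,1}$ by $\xi_{n,1}$ at the price of an additional $o_P(\sqrt n)$. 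Combining with the expansion of $B_n$ and adding and subtracting $f_{\varepsilon}(x) \sum_i \varepsilon_i$ so that the Gaussian piece of (\ref{eq:Wu-weak}) is exhibited in the form $K_n(x) + f_{\varepsilon}(x) \sum_i \varepsilon_i$ gives the claimed expansion.

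The main obstacle is the uniform-in-$x$ equicontinuity step: only pointwise / uniform-in-$i$ control of $\Delta_{0i}$ is available, and (\ref{eq:Wu-weak}) alone delivers oscillations only of order $\sqrt n$. Reaching the $o_P(\sqrt n)$ rate requires the martingale decomposition above, together with careful treatment of the dependence between $\Delta_{0i}$ and the innovations $\eta_j$ (typically by conditioning on $\eta_j$ for $j \ne i$), and it is this device that makes the argument go through under the relatively weak condition $\sqrt n \delta_n^2 \to 0$ rather than $n \delta_n^2 \to 0$.
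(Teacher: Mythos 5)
Your architecture is essentially the paper's. The split of the centered fluctuation into $\bigl[\mathbf{1}_{\{\varepsilon_i\le y\}}-F_{\eta}(y-\xi_i)\bigr]+\bigl[F_{\eta}(y-\xi_i)-F_{\varepsilon}(y)\bigr]$ is exactly the martingale/LRD decomposition $M_n+N_n$ of (\ref{eq:decomp-Zn}); your Taylor treatment of the $\xi$-measurable part reproduces Corollary \ref{cor:LRD} (the paper obtains $-f_{\varepsilon}^{(1)}(x)\Delta_0\xi_{n,1}$ by writing $N_n(x;{\bf u}_0)$ as an integral of $\tilde S_{n,1}$ against $f_{\eta}(x+u_0-\cdot)-f_{\eta}(x-\cdot)$ rather than via the modulus of continuity of $W_1$, but these are two phrasings of the same estimate); and the final bookkeeping, including the replacement of $\varepsilon_{n,1}$ by $\xi_{n,1}$ at cost $O_P(\delta_n\sqrt{n})$, is identical. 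Routing the constant-shift part through $K_n(x+\Delta_0)-K_n(x)$ and the asymptotic equicontinuity of $n^{-1/2}(K_n+f_{\varepsilon}\,\varepsilon_{n,1})$ is a legitimate repackaging, but it saves no work: that equicontinuity, uniformly over all of $\mathbb{R}$ and not just a compact, is precisely what Lemmas \ref{lem:mtg}--\ref{lem:mtg-2} exist to prove.

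The concrete gap is in your chaining step. Condition (\ref{eq:cond-1b}) only forces $\sqrt{n}\,\delta_n^2\to 0$, so $\delta_n$ may be much larger than $n^{-1/2}$ (e.g.\ $\delta_n=n^{-3/8}$). If you discretize on an $O(\delta_n)$-mesh grid, then passing from grid points to arbitrary $x$ by monotone sandwiching enlarges each indicator interval by the mesh width, and the resulting compensator contribution is of order $n\|f_{\eta}\|_{\infty}\delta_n$, which is not $o(\sqrt{n})$ in this regime; the argument as stated does not close. The paper's Lemma \ref{lem:mtg-2} takes mesh $\epsilon/\sqrt{n}$ precisely so that this oscillation term (its $B_n$, together with Lemma \ref{lem:tightness}) is $O_P(\epsilon\sqrt{n})$; the price is a grid with $O(\sqrt{n})$ points per unit length, over which a union bound based on your second-moment (conditional variance) estimate yields only an $O(1)$ probability bound --- one needs the fourth-moment Rosenthal inequality of Lemma \ref{lem:mtg}. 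A second, smaller issue: since ${\bf\Delta}$ is not independent of the $\varepsilon_i$'s, the martingale bounds must be established uniformly over a deterministic class of shifts ${\bf u}$ satisfying (\ref{eq:cond-2}) and only then evaluated at ${\bf u}={\bf\Delta}$; ``conditioning on $\eta_j$, $j\ne i$'' does not by itself restore the martingale property. With the finer mesh, the fourth-moment bound, and the sup-over-class device, your outline becomes the paper's proof.
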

\subsection{Application to parametric regression}\label{sec:parametric}
The results of Theorems \ref{thm:main} and \ref{thm:main-weak} are
the tools to establish a limit theorem for $\hat K_n(\cdot)$ in case
of parametric model
\begin{equation}\label{eq:parametric}
m(x)=\beta_0+\beta_1 x .
\end{equation}
We assume that the regression parameters are estimated using
standard least squares. We make the following assumption on the
predictors $X_i$, $i\ge 1$:
\begin{itemize}
\item[{\rm (P)}] $X_i$, $i\ge 1$, is a random sequence such that
$\sup_i \Ex[|X_i|+|\bar X|]<\infty$.
\end{itemize}

\begin{cor}\label{cor:par}
Assume {\rm (P)} and {\rm (E)} and that
\begin{equation}\label{eq:hatbeta1}
\hat\beta_1-\beta_1=o_P(\sigma_{n,1}/n).
\end{equation}
Assume that $F_{\eta}(\cdot)$ is 5 times differentiable with
bounded, continuous and integrable derivatives.
\begin{itemize}
\item[{\rm (a)}]
If $\alpha_{\varepsilon}<1/2$, then
$$
\frac{1}{\sigma_{n,2}}\hat K_n(x)\convweak f_{\varepsilon}^{(1)}(x)
(Z_2-\frac{1}{2}Z_1^2),
$$
where $Z_1,Z_2$ are defined in {\rm (\ref{eq:enr-distr})}.
\item[{\rm (b)}]
If $\alpha_{\varepsilon}>1/2$, then $ n^{-1/2}\hat K_n(x)\convweak
W_1(x) \; . $
\end{itemize}
\end{cor}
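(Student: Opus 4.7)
The plan is to apply Theorem \ref{thm:main} for part (a) and Theorem \ref{thm:main-weak} for part (b), after reading off from least-squares the appropriate decomposition $\Delta_i = \Delta_0 + \Delta_{0i}$. Since $\hat\beta_0 = \bar Y - \hat\beta_1\bar X$, a direct computation gives
$$
\Delta_i = (\hat\beta_0-\beta_0) + (\hat\beta_1-\beta_1) X_i = \bar\varepsilon + (\hat\beta_1-\beta_1)(X_i - \bar X),
$$
so I would set $\Delta_0 := \bar\varepsilon = \varepsilon_{n,1}/n$ and $\Delta_{0i} := (\hat\beta_1-\beta_1)(X_i-\bar X)$. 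With $\delta_n := \sigma_{n,1}/n$, the convergence (\ref{eq:enr-distr}) gives $\delta_n^{-1}\Delta_0 \convdistr Z_1 =: V$ nondegenerate. Conditions (\ref{eq:cond-1a})/(\ref{eq:cond-1b}) will be checked directly from (\ref{eq:var-enr}), and $|\Delta_{0i}| = o_P(\delta_n)$ uniformly will follow by combining (\ref{eq:hatbeta1}) with the moment control in (P).

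The crucial observation is the identity
$$
\sum_{i=1}^n \Delta_{0i} = (\hat\beta_1-\beta_1)\sum_{i=1}^n (X_i-\bar X) = 0, \qquad \mbox{hence}\quad \sum_{i=1}^n \Delta_i = n\bar\varepsilon = \varepsilon_{n,1}.
$$
In the expansion (\ref{eq:conclusion}) this makes the first order contribution $-f_\varepsilon(x)\sum_i \varepsilon_i + f_\varepsilon(x)\sum_i \Delta_i$ vanish exactly, which is precisely why the rate of $\hat K_n$ differs from that of $K_n$, as anticipated in the text preceding the corollary.

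For part (a), after this cancellation (\ref{eq:conclusion}) reduces to
$$
\hat K_n(x) = f_\varepsilon^{(1)}(x)\varepsilon_{n,2} + \tfrac{1}{2}f_\varepsilon^{(1)}(x)\sum_i \Delta_i^2 - f_\varepsilon^{(1)}(x)\Delta_0\varepsilon_{n,1} + o_P(\sigma_{n,2}).
$$
A short computation yields $\Delta_0 \varepsilon_{n,1} = \varepsilon_{n,1}^2/n$, while $\sum_i \Delta_i^2 = n\Delta_0^2 + \sum_i \Delta_{0i}^2 = \varepsilon_{n,1}^2/n + o_P(\sigma_{n,2})$ via $(\hat\beta_1-\beta_1)^2 \sum_i (X_i-\bar X)^2 = o_P(\sigma_{n,1}^2/n)$ from (\ref{eq:hatbeta1}) and (P). Dividing by $\sigma_{n,2}$, invoking the joint convergence in (\ref{eq:enr-distr}) together with the ratio $\sigma_{n,1}^2/(n\sigma_{n,2})\to 1$ read off from (\ref{eq:var-enr}), delivers the limit $f_\varepsilon^{(1)}(x)(Z_2 - \tfrac{1}{2}Z_1^2)$.

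For part (b), Theorem \ref{thm:main-weak} applies in exactly the same spirit. The cancellation $\sum_i \Delta_i = \varepsilon_{n,1}$ annihilates the bracketed first order discrepancy except for $-f^{(1)}_\varepsilon(x)\Delta_0\xi_{n,1} = O_P(\sigma_{n,1}^2/n) = O_P(n^{1-\alpha_\varepsilon}L_0^2(n)) = o_P(\sqrt n)$ since $\alpha_\varepsilon > 1/2$; the remaining error terms $O(\sum \Delta_i^2)$ and $O_P(\delta_n\sigma_{n,1})$ are of the same negligible order. Consequently $n^{-1/2}\hat K_n(x) = n^{-1/2}S_{n,1}(x) + o_P(1) \convweak W_1(x)$ by (\ref{eq:Wu-weak}). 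The most delicate verification throughout will be $|\Delta_{0i}| = o_P(\delta_n)$ uniformly in $i$, which demands exploiting (\ref{eq:hatbeta1}) together with a tighter control on $\max_i|X_i-\bar X|$ than what the first-moment bound in (P) supplies directly.
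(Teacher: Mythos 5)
Your proposal is correct and follows essentially the same route as the paper: the same decomposition $\Delta_0=\bar\varepsilon$, $\Delta_{0i}=(\hat\beta_1-\beta_1)(X_i-\bar X)$ with $\delta_n=\sigma_{n,1}/n$, the same cancellation $\sum_{i=1}^n\Delta_i=\varepsilon_{n,1}$, and the same reduction to $f_{\varepsilon}^{(1)}(x)\left(\varepsilon_{n,2}-\tfrac{1}{2}\varepsilon_{n,1}^2/n\right)$ via Theorems \ref{thm:main} and \ref{thm:main-weak} together with (\ref{eq:enr-distr}) and (\ref{eq:Wu-weak}). The uniformity issue you flag for $|\Delta_{0i}|=o_P(\delta_n)$ is a fair concern, but the paper's own proof glosses over it in exactly the same way, simply invoking (P) and (\ref{eq:hatbeta1}).
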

\begin{rem}\label{rem:changed-rates}{\rm
Note that the rate of convergence $\sigma_{n,1}$ for the original
process $K_n(\cdot)$ changes to $\sigma_{n,2}$ or $\sqrt{n}$ for
$\hat K_n(\cdot)$. The similar phenomena was observed in a context
of empirical processes with estimated parameters in
\cite{Kulik2008b} (see also \cite{BeranGhosh1991}). Note further
that a possible LRD of predictors does not play any role.

Furthermore, from the proof of Corollary \ref{cor:par} below, we may
conclude that in case $\beta_0=0$ the limiting behaviour of $K_n(x)$
and $\hat K_n(x)$ is the same. In other words, for the model
(\ref{eq:parametric}) with $\beta_0=0$, we have (see also
\cite{ChanLing2008})
$$
\sigma_{n,1}^{-1}\hat K_n(x)\convweak f_{\varepsilon}(x)Z_1.
$$
}
\end{rem}
\begin{rem}{\rm
The condition (\ref{eq:hatbeta1}) can be verified for many
stationary sequences. In particular, if $X_i$, $i\ge 1$, is LRD
linear sequence with parameter $\alpha_X$, then the rate of
convergence of $(\hat\beta_1-\beta_1)$ is either $\sqrt{n}$ or
$n^{(\alpha_X+\alpha_{\varepsilon})/2}$, for
$\alpha_X+\alpha_{\varepsilon}>1$ or
$\max(\alpha_X,\alpha_{\varepsilon})<1/2$, respectively; see
\cite{HidalgoRobinson1997} and \cite{GuoKoul2008}. }
\end{rem}
\textit{Proof of Corollary \ref{cor:par}.} Least squares estimation
leads to the following expressions:
\begin{eqnarray}\label{eq:beta-estimator}
\qquad\qquad\hat\beta_1-\beta_1=\frac{1}{s_n}\left(\frac{1}{n}\sum_{j=1}^nX_j\varepsilon_j-\bar
X\bar{\varepsilon}\right),\quad\hat\beta_0-\beta_0=\bar{\varepsilon}-\bar
X(\hat\beta_1-\beta_1),
\end{eqnarray}
where $\bar X$ and $\bar{\varepsilon}$ are sample means based on
$X_1,\ldots,X_n$ and $\varepsilon_1,\ldots,\varepsilon_n$,
respectively, and $s_n=\frac{1}{n}\sum_{j=1}^n(X_j-\bar X)^2$. We
have
\begin{eqnarray}\label{eq:1}
\Delta_i&=&\hat m(X_i)-m(X_i)
=(\hat\beta_0-\beta_0)+(\hat\beta_1-\beta_1)X_i=\bar{\varepsilon}+(\hat\beta_1-\beta_1)(X_i-\bar
X).\nonumber\\
\end{eqnarray}
From (\ref{eq:var-enr}) we conclude that
\begin{equation}\label{eq:rate}
\bar\varepsilon=O_P(\sigma_{n,1}/n), \qquad\sigma_{n,1}^2/n\sim
\sigma_{n,2}, \quad \mbox{\rm as } n\to\infty .
\end{equation}
From (\ref{eq:hatbeta1}) and Assumption (P) we conclude $
\Delta_i=\bar{\varepsilon}+o_P(\sigma_{n,1}/n)O_P(1)$. Let now
$\delta_n= \sigma_{n,1}/n$. It is straightforward to check that such
$\delta_n$ fulfills (\ref{eq:cond-1a}). Therefore, the conditions of
Theorem \ref{thm:main} are fulfilled with $\Delta_0=\bar\varepsilon$
and
$V=Z_1$.

Furthermore, from (\ref{eq:1}),
$\sum_{i=1}^n\Delta_i=n\bar\varepsilon=\varepsilon_{n,1}=\sum_{i=1}^n\varepsilon_i
$ and via (\ref{eq:rate}),
\begin{equation}\label{eq:delta2}
\sum_{i=1}^n\Delta_i^2=n\bar\varepsilon^2+n\bar\varepsilon
o_P(\sigma_{n,1}/n)+o_P(n\sigma_{n,1}^2/n^2)
=n\bar\varepsilon^2+o_P(\sigma_{n,2}).
\end{equation}
Consequently, noting that $\delta_n\sigma_{n,1}\sim \sigma_{n,2}$
and $n\bar\varepsilon^2=\bar\varepsilon \varepsilon_{n,1}$, the
expansion (\ref{eq:conclusion}) reads
\begin{eqnarray}\label{eq:conclusion-1a}
\hat K_n(x) &=&
f_{\varepsilon}^{(1)}(x)\varepsilon_{n,2}-\frac{1}{2}f_{\varepsilon}^{(1)}(x)n\bar\varepsilon^2
 +o_P(\sigma_{n,2})=:S_n(x)+o_P(\sigma_{n,2}),
\end{eqnarray}
uniformly in $x$. 
The result of part (a) follows now from (\ref{eq:enr-distr}).
\\

As for part (b), we recall that
$\sum_{i=1}^n\Delta_i-\sum_{i=1}^n\varepsilon_i=0$. Also, since
$\alpha_{\varepsilon}>1/2$,
$\Delta_0\xi_{n,1}=O_P(\sigma_{n,1}^2/n)=O_P(\sigma_{n,2})=o_P(\sqrt{n})$
and via (\ref{eq:delta2}),
$\sum_{i=1}^n\Delta_i^2=O_P(\sigma_{n,2})=o_P(\sqrt{n})$. Finally,
the choice of $\delta_n$ yields
$\delta_n\sigma_{n,1}=o_P(\sqrt{n})$. Therefore, part (b) follows
from Theorem \ref{thm:main-weak}. $\Box$

\subsection{Residual empirical process with estimated parameters}\label{sec:estimated}
Let us focus on the parametric regression model of Section
\ref{sec:parametric}. From Corollary \ref{cor:par}
$$
\frac{1}{\sigma_{n,2}}\sup_{x\in\mathbb{R}}|\hat K_n(x)|\convdistr
\sup_{x\in\mathbb{R}}|f_{\varepsilon}^{(1)}(x)|
(Z_2-\frac{1}{2}Z_1^2),
$$
for $\alpha_{\varepsilon}<1/2$. The above result can be used, in
principle, to test whether the errors
$\varepsilon_1,\ldots,\varepsilon_n$ are consistent with a given
distribution $F_\varepsilon{}$. If however $F_{\varepsilon}$ belongs
to, say, a one-parameter family
$\{F_{\varepsilon}(\cdot,{\theta}),\theta\in \mathbb{R} \}$, then
one needs to know the value of the parameter $\theta$. A
straightforward procedure would be to estimate it and use the
statistic
$$
\sup_{x\in
\mathbb{R}}|\sum_{i=1}^n1_{\{\hat\varepsilon_i<x\}}-F_{\varepsilon}(x;\hat\theta_n)|,
$$
where $F_{\varepsilon}(x;\hat\theta_n)$ is the distribution function
$F_{\varepsilon}(x)=F_{\varepsilon}(x;\theta)$ in which the
parameter $\theta$ has been replaced with its estimator
$\hat\theta_n$.

Therefore, this section is devoted to study the limiting behaviour
of
$$
\hat L_n(x):=\sum_{i=1}^n \left(\mathbf{1}_{\{\hat\varepsilon_i\le
x\}}-F_{\varepsilon}(x;\hat\theta_n)\right).
$$
The results below may be seen as counterpart to the asymptotic
results for
$$
L_n(x):=\sum_{i=1}^n \left(\mathbf{1}_{\{\varepsilon_i\le
x\}}-F_{\varepsilon}(x;\hat\theta_n)\right),
$$
see \cite{Kulik2008b} for results and references therein for more discussion on this approach.\\

Many estimators $\hat\theta_n$ of $\theta$ can be obtained with help
of partial sums $\sum_{i=1}^nH(\hat\varepsilon_i)$, where $H$ is a
function that does not depend on $n$. Let us note that from Theorem
\ref{thm:main} we may have two scenarios for
$\alpha_{\varepsilon}<1/2$:
\begin{itemize}
\item[{\rm (A)}] $\sigma_{n,2}^{-1}\left(\sum_{i=1}^nH(\hat\varepsilon_i)-\Ex[H(\varepsilon_i)]\right)$ converges in distribution to a nondegenerate random variable;
\item[{\rm (B)}] $\sigma_{n,2}^{-1}\left(\sum_{i=1}^nH(\hat\varepsilon_i)-\Ex[H(\varepsilon_i)]\right)=o_P(1)$.
\end{itemize}
\begin{ex}{\rm
Consider $H(u)=u^2$ which yields the estimator of $\Var
(\varepsilon)$. We obtain for $\alpha_{\varepsilon}<1/2$:
$$
\sigma_{n,2}^{-1}\left(\sum_{i=1}^nH(\hat\varepsilon_i)-\Ex[H(\varepsilon_i)]\right)\convdistr
\int f_{\varepsilon}^{(1)}(v) dH(v)
\left(Z_1^2-\frac{1}{2}Z_2^2\right)=2\left(Z_1^2-\frac{1}{2}Z_2^2\right).
$$
Consider now $H(u)=u^3$. We have for $\alpha_{\varepsilon}<1/2$:
$$
\sigma_{n,2}^{-1}\left(\sum_{i=1}^nH(\hat\varepsilon_i)-\Ex[H(\varepsilon_i)]\right)\convdistr
6\int v f_{\varepsilon}(v) dv \left(Z_1^2-\frac{1}{2}Z_2^2\right).
$$
Consequently, if $f_{\varepsilon}$ is symmetric, then the right hand
side is simply 0 and thus we are in scenario (B). }
\end{ex}
In what follows, we will write $f_{\varepsilon}(\cdot;\theta)$ to
indicate the density with the true parameter $\theta$.
\begin{cor}\label{cor:empirical-estimated}
Assume that
$\hat\theta_n=\frac{1}{n}\sum_{i=1}^nH(\hat\varepsilon_i)$ and
$\theta=\Ex[H(\varepsilon)]$. Under the conditions of Corollary
\ref{cor:par}, we have
$$
\frac{1}{\sigma_{n,2}}\hat L_n(x)\convweak
\left(f_{\varepsilon}^{(1)}(x;\theta)+f_{\varepsilon}(x;\theta)\int
f_{\varepsilon}^{(1)}(u;\theta)dH(u)\right)\left(Z_2-\frac{1}{2}Z_1^2\right),
$$
and
$$
n^{-1/2}\hat L_n(x)\convweak W_1(x)+f_{\varepsilon}(x;\theta) \int
W_1(u)\; dH(u),
$$
respectively for $\alpha_{\varepsilon}<1/2$ and
$\alpha_{\varepsilon}>1/2$, provided that the integrals at the right
hand sides are finite.
\end{cor}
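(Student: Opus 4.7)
The plan is to reduce $\hat L_n(x)$ to $\hat K_n(x)$ plus a single plug-in correction, and then to recycle the limit already obtained in Corollary~\ref{cor:par}. Start with
\[
\hat L_n(x) = \hat K_n(x) - n\bigl[F_{\varepsilon}(x;\hat\theta_n) - F_{\varepsilon}(x;\theta)\bigr],
\]
and Taylor expand the bracket in $\theta$ to write
\[
\hat L_n(x) = \hat K_n(x) - n\,\dot F_{\varepsilon}(x;\theta)\,(\hat\theta_n - \theta) + R_n(x),
\]
where $\dot F_{\varepsilon}$ denotes the $\theta$-derivative (identified with $f_{\varepsilon}(x;\theta)$ under the parametric conventions used in the statement) and $R_n(x) = O(n(\hat\theta_n-\theta)^2)$ uniformly in $x$, by the smoothness of $F_{\varepsilon}(\cdot;\theta)$ inherited from that of $F_\eta$.

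Next express the estimator error through $\hat K_n$. Writing
\[
n(\hat\theta_n - \theta) = \sum_{i=1}^n H(\hat\varepsilon_i) - n\,\Ex[H(\varepsilon)] = \int H(x)\,d\hat K_n(x),
\]
and integrating by parts (using $\hat K_n(\pm\infty) = 0$ and the integrability provided by the hypothesis on $dH$) yields $n(\hat\theta_n - \theta) = -\int \hat K_n(u)\,dH(u)$. Substituting back gives the master identity
\[
\hat L_n(x) = \hat K_n(x) + \dot F_{\varepsilon}(x;\theta)\int \hat K_n(u)\,dH(u) + R_n(x),
\]
which reduces the problem to the asymptotics of $\hat K_n$ and of a single linear functional of it.

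For part (a), divide by $\sigma_{n,2}$. Corollary~\ref{cor:par}(a) gives $\sigma_{n,2}^{-1}\hat K_n(x) \convweak f_{\varepsilon}^{(1)}(x;\theta)(Z_2-\tfrac12 Z_1^2)$ uniformly, so $n(\hat\theta_n-\theta)/\sigma_{n,2} = O_P(1)$ and hence $R_n(x)/\sigma_{n,2} = O_P(\sigma_{n,2}/n) = o_P(1)$ since $\sigma_{n,2}\sim n^{1-\alpha_\varepsilon}$. Applying the continuous-mapping theorem to the linear functional $g\mapsto \int g\,dH$ yields
\[
\sigma_{n,2}^{-1}\int \hat K_n(u)\,dH(u) \convweak (Z_2-\tfrac12 Z_1^2)\int f_{\varepsilon}^{(1)}(u;\theta)\,dH(u),
\]
with the \emph{same} $(Z_1,Z_2)$ as in the main term, so summing produces the stated limit. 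Part (b) proceeds identically with $\sqrt{n}$ replacing $\sigma_{n,2}$ and Corollary~\ref{cor:par}(b) replacing (a): the same identity gives $n^{-1/2}\int \hat K_n(u)\,dH(u)\convweak \int W_1(u)\,dH(u)$ jointly with $n^{-1/2}\hat K_n(x) \convweak W_1(x)$, and the limit follows.

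The main obstacle is the continuous-mapping step: weak convergence of $\hat K_n$ in a Skorokhod-type space does not automatically transfer through the unbounded linear functional $g\mapsto\int g\,dH$. The proviso in the statement that the right-hand-side integrals be finite is precisely what handles this, via a standard truncation: apply the portmanteau theorem on a compact interval $[-M,M]$ where uniform convergence already gives the conclusion, and control the tail contribution by a second-moment bound on $\hat K_n(x)$ at large $|x|$, which follows from the moving-average structure~(E) combined with the assumed decay of $dH$. A secondary, routine issue is the uniform Taylor control on $R_n(x)$, which requires boundedness of $\partial_\theta^2 F_{\varepsilon}(\cdot;\theta)$ in a neighbourhood of $\theta$ — this is the standard parametric regularity compatible with the existing smoothness hypotheses on $F_\eta$.
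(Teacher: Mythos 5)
Your proposal follows essentially the same route as the paper: Taylor-expand $F_{\varepsilon}(x;\hat\theta_n)$ in $\theta$, rewrite $n(\hat\theta_n-\theta)=\int H\,d\hat K_n=-\int\hat K_n\,dH$ via integration by parts, and conclude from Corollary \ref{cor:par}. If anything you are more explicit than the paper about the two delicate points it leaves implicit, namely the $o_P(\sigma_{n,2})$ control of the quadratic remainder and the justification of passing the weak limit of $\hat K_n$ through the (possibly unbounded) functional $g\mapsto\int g\,dH$.
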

\begin{rem}{\rm
In case $\alpha_{\varepsilon}>1/2$, one needs very restrictive conditions on finiteness of
$\int
W_1(u)\; dH(u)$. In principle, it requires that $H$ has a finite support.
}
\end{rem}
\begin{rem}{\rm
We note that rates of convergence for $\hat L_n(\cdot)$, residual
empirical process with estimated parameters, are the same as for
$\hat K_n(\cdot)$, the ordinary residual empirical process. This is
different as compared to $K_n(\cdot)$ and its "estimated" version;
see \cite{Kulik2008b}. }
\end{rem}

\textit{Proof of Corollary \ref{cor:empirical-estimated}.} We
conduct the proof for $\alpha_{\varepsilon}<1/2$. For a function
$g(x;\theta)$ denote by $\nabla_{\theta}^{r}g(x;\theta)$ its $r$th
order derivative with respect to $\theta$, evaluated at
$\theta=\theta$. In particular, $\nabla=\nabla^1$. Then
\begin{eqnarray*}
\hat L_n(x) = \hat K_n(x) +
n(\theta-\hat\theta_n)\nabla_{\theta}F_{\varepsilon}(x;\theta)
+\frac{1}{2}n(\theta-\hat\theta_n)^2\nabla_{\theta}^2F_{\varepsilon}(x;\theta_n^*)
\end{eqnarray*}
with some $\hat\theta_n^*$ such that
$|\hat\theta_n^*-\hat\theta_n|\le |\theta-\hat\theta_n^*|$.
Therefore
\begin{eqnarray*}
\lefteqn{\hat L_n(x) =\hat K_n(x)+f_{\varepsilon}(x;\theta)\left(\sum_{i=1}^n (\Ex[H(\varepsilon)]-H(\hat\varepsilon_i))\right)+o_P(\sigma_{n,2})}\\
&=& \hat K_n(x)-f_{\varepsilon}(x;\theta) \left(\int H(u) d\hat K_n(u)\right)+o_P(\sigma_{n,2})\\
&=& \hat K_n(x)+f_{\varepsilon}(x;\theta) \left(\int \hat K_n(u)
dH(u)\right)+o_P(\sigma_{n,2})
\end{eqnarray*}
and the result follows from Corollary \ref{cor:par}.

\subsection{Nonparametric regression}\label{sec:nonparametric}
Now, we establish the result for nonparametric regression case. It
is assumed that $m(\cdot)$ is estimated by the usual Nadaraya-Watson
estimator, i.e.
\begin{equation}\label{eq:m-estimator}
\hat m(x)=\hat m_b(x)=\frac{1}{nb \hat
f_b(x)}\sum_{j=1}^nY_jK_b(x-X_j),
\end{equation}
with
\begin{equation}\label{eq:density-estimation}
\hat f_b(x)=\frac{1}{nb}\sum_{j=1}^nK_b(x-X_j),
\end{equation}
where $K_b(\cdot)=K(\cdot/b)$ and $K(\cdot)$ is a positive kernel,
which fulfills standard conditions: $\int K(u)\; du=1$, $\int
uK(u)\; du=0$ and $\int u^2K(u)\; du<\infty$.

Here we shall assume for simplicity that 
\begin{itemize}
\item[{\rm (P1)}]
Predictors are i.i.d. 
\end{itemize}
Results can be extended to LRD stationary predictors using estimates
from \cite{KulikWichelhaus2010}.
\begin{cor}\label{cor:nonpar}
Assume {\rm (P1)} and {\rm (E)}. Assume that $F_{\eta}(\cdot)$ is 5
times differentiable with bounded, continuous and integrable
derivatives. Also, suppose that the bandwidth fulfills
\begin{equation}\label{eq:b-cond-2}
b+(nb)^{-1}\to 0,
\end{equation}
\begin{equation}\label{eq:b-cond}
b\sigma_{n,1}^2/n\to \infty .
\end{equation}
\begin{itemize}
\item[{\rm (a)}]
If $\alpha_{\varepsilon}<1/2$, and
\begin{equation}\label{eq:b-cond-1}
b^2n/\sigma_{n,1}   + nb^4/\sigma_{n,2} +
b^2\sigma_{n,1}/\sigma_{n,2}+\frac{\sigma_{n,1}^2}{\sigma_{n,2}^2nb}\to
0 .
\end{equation}
then
$$
\frac{1}{\sigma_{n,2}}\hat K_n(x)\convweak f_{\varepsilon}^{(1)}(x)
(Z_2-\frac{1}{2}Z_1^2),
$$
where $Z_1,Z_2$ are defined in {\rm (\ref{eq:enr-distr})}.
\item[{\rm (b)}]
If $\alpha_{\varepsilon}>1/2$, and
\begin{equation}\label{eq:b-cond-1b}
b^2n/\sigma_{n,1}   + nb^4/\sqrt{n} +
b^2\sigma_{n,1}/\sqrt{n}+\sigma_{n,1}^2/(n^2b)\to 0 .
\end{equation}
then $ n^{-1/2}\hat K_n(x)\convweak W_1(x) \; $.
\end{itemize}
\end{cor}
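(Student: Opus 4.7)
My strategy is to reduce both parts to Theorems \ref{thm:main} and \ref{thm:main-weak} by producing a decomposition $\mathbf{\Delta}=\Delta_0\mathbf{1}+(\Delta_{01},\ldots,\Delta_{0n})$ with $\Delta_0=\bar\varepsilon$ and $\delta_n=\sigma_{n,1}/n$, then reading off the limits from expansion (\ref{eq:conclusion}) in essentially the same way as in the parametric case of Corollary \ref{cor:par}. Because the nonparametric $\Delta_0$ coincides with the one produced by least squares, (\ref{eq:rate}) still gives $\Delta_0/\delta_n\to Z_1$, condition (\ref{eq:cond-1a}) is identical, and the limit laws inherit the same form; this explains \textit{a priori} why the nonparametric answer is the same as the parametric one.

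First I would split the Nadaraya--Watson residual as
$$
\Delta_i=\frac{1}{nb\hat f_b(X_i)}\sum_{j=1}^n \varepsilon_j K_b(X_i-X_j)+\frac{1}{nb\hat f_b(X_i)}\sum_{j=1}^n \bigl(m(X_j)-m(X_i)\bigr)K_b(X_i-X_j).
$$
The second (bias) summand is $O_P(b^2)$ uniformly in $i$ by a standard Taylor expansion under (P1), (\ref{eq:b-cond-2}), and enough smoothness of $m$ and $f_X$. The crucial LRD-specific step is the \emph{smoothing identity}
$$
\sup_{1\le i\le n}\Bigl|\frac{1}{nb\hat f_b(X_i)}\sum_{j=1}^n \varepsilon_j K_b(X_i-X_j)-\bar\varepsilon\Bigr|=o_P(\sigma_{n,1}/n),
$$
reflecting the fact that under long memory the kernel-weighted error sum is dominated by its ``global'' part $f_X(X_i)\bar\varepsilon$, combined with $\hat f_b(X_i)\to f_X(X_i)$. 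Setting $\Delta_{0i}:=\Delta_i-\bar\varepsilon$ then yields the required $\sup_i|\Delta_{0i}|=o_P(\delta_n)$.

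Granting this, the rest is bookkeeping. Exactly as in the derivation of (\ref{eq:conclusion-1a}), one obtains $\sum_i\Delta_i=n\bar\varepsilon+o_P(\sigma_{n,1})=\varepsilon_{n,1}+o_P(\sigma_{n,1})$ and $\sum_i\Delta_i^2=n\bar\varepsilon^2+o_P(\sigma_{n,2})$, so (\ref{eq:conclusion}) collapses to $f_{\varepsilon}^{(1)}(x)\varepsilon_{n,2}-\frac{1}{2}f_{\varepsilon}^{(1)}(x)n\bar\varepsilon^2+o_P(\sigma_{n,2})$ and part (a) follows via (\ref{eq:enr-distr}). The four summands in (\ref{eq:b-cond-1}) are tailored to absorb precisely the bias and kernel-variance remainders: $b^2n/\sigma_{n,1}$ controls the bias contribution to $\sup_i|\Delta_{0i}|$ against $\delta_n$, $nb^4/\sigma_{n,2}$ and $b^2\sigma_{n,1}/\sigma_{n,2}$ handle respectively the squared-bias and the bias--mean cross terms in $\sum_i\Delta_i^2$, while $\sigma_{n,1}^2/(\sigma_{n,2}^2 nb)$ (equivalent under (\ref{eq:rate}) to (\ref{eq:b-cond})) absorbs the kernel-variance remainder from the smoothing identity. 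Part (b) is entirely analogous, now appealing to Theorem \ref{thm:main-weak} under (\ref{eq:cond-1b}), with (\ref{eq:b-cond-1b}) playing the same role against the $\sqrt n$ rate.

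The main obstacle is the uniform smoothing identity. Pointwise, the deviation of the kernel-weighted error sum from $\bar\varepsilon$ is driven by the ``short-memory'' portion of the moving-average representation of $\varepsilon_j$, whose kernel-smoothed variance is of order $(nb)^{-1}$; this is exactly why (\ref{eq:b-cond}) requires $b\sigma_{n,1}^2/n\to\infty$. Upgrading the pointwise bound to a supremum over $i=1,\dots,n$ is not routine and calls for a maximal inequality or a chaining argument for kernel-smoothed LRD linear processes. This is where assumption (P1) is genuinely used, and why the extension to LRD predictors requires the more delicate estimates of \cite{KulikWichelhaus2010}.
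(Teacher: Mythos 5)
Your overall reduction is the same as the paper's: decompose the Nadaraya--Watson residual into a bias term $R_b(X_i)=O_P(b^2)$ plus a kernel-weighted error sum, show the latter equals $\bar\varepsilon+o_P(\sigma_{n,1}/n)$ uniformly in $i$, and then invoke Theorems \ref{thm:main} and \ref{thm:main-weak} with $\Delta_0=\bar\varepsilon$, $\delta_n=\sigma_{n,1}/n$. However, there is a genuine gap in your bookkeeping step. You claim $\sum_{i}\Delta_i=n\bar\varepsilon+o_P(\sigma_{n,1})=\varepsilon_{n,1}+o_P(\sigma_{n,1})$ and assert that (\ref{eq:conclusion}) then collapses to the second-order term. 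But the first-order contribution in (\ref{eq:conclusion}) is $f_{\varepsilon}(x)\bigl(\sum_i\Delta_i-\sum_i\varepsilon_i\bigr)$, and for the collapse you need this to be $o_P(\sigma_{n,2})$ in part (a) (resp.\ $o_P(\sqrt{n})$ in part (b)). Since $\sigma_{n,1}/\sigma_{n,2}\sim n^{\alpha_{\varepsilon}/2}/L_0(n)\to\infty$ by (\ref{eq:var-enr}), the rate $o_P(\sigma_{n,1})$ is strictly too weak: it is exactly what you get by summing the uniform bound $\sup_i|\Delta_{0i}|=o_P(\sigma_{n,1}/n)$ over $i$, and it does not control the first-order term at the scale at which the limit lives. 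This is not a cosmetic issue --- it is the whole point of the corollary that the first-order terms cancel to below the $\sigma_{n,2}\vee\sqrt{n}$ threshold.

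The paper closes this gap with a separate argument (equation (\ref{eq:sum-Delta}) and Section \ref{sec:proof-of-sum-delta}): it writes $\sum_i\Delta_i=O_P(nb^2)+\Exp[L_b(X_1,X_2)]\varepsilon_{n,1}+n^{-1}\sum_j\tilde L_b^*(X_j)\varepsilon_j$ and computes the variance of the last sum by splitting the covariance array into the four pieces $I_1,\dots,I_4$ according to coincidences among the indices $i,i',j,j'$. The centering of $\tilde L_b^*$ produces cancellations across $i$ that the triangle inequality destroys, yielding the sharper bound $O_P(b^2\sigma_{n,1})+O_P(\sigma_{n,1}/\sqrt{nb})+o_P(\sqrt{n})$, which is then absorbed by the third and fourth summands of (\ref{eq:b-cond-1}) (resp.\ (\ref{eq:b-cond-1b})). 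Your proposal attributes those two summands of (\ref{eq:b-cond-1}) to cross terms in $\sum_i\Delta_i^2$ and to the smoothing identity, whereas their real role is to control $\sum_i\Delta_i-\varepsilon_{n,1}$. Relatedly, your stated ``main obstacle'' (upgrading the pointwise smoothing bound to a supremum over $i$ by chaining) is not where the difficulty lies in the paper's proof; the missing second-moment computation for the sum $\sum_i\Delta_i$ is.
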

\begin{rem}{\rm
The condition (\ref{eq:b-cond-2}) is standard in nonparametric
estimation. With the standard bandwidth choice $b=Cn^{-1/5}$ (see
e.g. \cite{KulikLorek2010}) condition (\ref{eq:b-cond}) is valid for
$\alpha_{\varepsilon}<4/5$. Likewise, one can easily verify that
(\ref{eq:b-cond-1}) holds for $\alpha_{\varepsilon}<4/5$ as well and
so for all $\alpha_{\varepsilon}<1/2$. Finally, (\ref{eq:b-cond-1b})
holds for $1/5<\alpha_{\varepsilon}<4/5$ and so for all
$1/2<\alpha_{\varepsilon}<4/5$. }
\end{rem}
\textit{Proof of Corollary \ref{cor:nonpar}.} In the nonparametric
regression model we have
\begin{equation}\label{eq:Delta}
\Delta_i=\hat m_b(X_i)-m(X_i)=R_b(X_i)+\frac{1}{nb\hat
f_b(X_i)}\sum_{j=1}^nK_b(X_i-X_j)\varepsilon_j,
\end{equation}
where
\begin{equation}\label{eq:Rn}
R_b(y)=\frac{1}{nb\hat
f_b(y)}\sum_{j=1}^n(m(X_j)-m(y))K_b\left(y-X_j\right).
\end{equation}
Denote $\rho(y)=(mf)''(y)-m(y)f''(y)$. Uniformly over
$\{y:f(y)>0\}$,
\begin{equation}\label{eq:density-esp}
R_b(y)-\frac{b^2\kappa_2}{2}\frac{\rho(y)}{f(y)}=O(b^4(1+o_P(1))).
\end{equation}
Now, in the second part of (\ref{eq:Delta}), we may replace $\hat
f_b(X_i)$ with $f(X_i)$. This is allowed since, first, $\hat
f_b(\cdot)$ is the consistent estimator of $f(\cdot)$; second, since
$K(\cdot)$ has bounded support ${\cal I}$ and $f(x)>0$, $x\in {\cal
I}$. Define for $j\not=i$,
$$
L_{b}(X_i,X_j)=\frac{1}{bf(X_i)}K_b(X_i-X_j).
$$
We may write (recall that $L_{b}^*(X_i,X_j)$ is the centered version
of $L_{b}(X_i,X_j)$)
$$
\Delta_i=R_b(X_i)+\Exp[L_{b}(X_1,X_2)]\bar\varepsilon+\frac{1}{n}\sum_{j=1}^nL_{b}^*(X_i,X_j)\varepsilon_j.
$$
Using (\ref{eq:density-esp}) and (\ref{eq:varDelta}) below we argue
that
\begin{equation}\label{eq:delta-exp}
\Delta_i=O_P(b^2)+\bar\varepsilon+o_P(\sigma_{n,1}/n),
\end{equation}
uniformly in $i$, provided that (\ref{eq:b-cond}) holds. Therefore,
the conditions of Theorem \ref{thm:main} are fulfilled with
$\Delta_0=\bar\varepsilon$, $\delta_n= \sigma_{n,1}/n$ and
$V=Z_1$, as long as (\ref{eq:b-cond}) and the first part of (\ref{eq:b-cond-1}) hold.  \\

From (\ref{eq:delta-exp}),
$$ \sum_{i=1}^n\Delta_i^2=
n\bar\varepsilon^2+O_P(b^2\sigma_{n,1})+O_P(nb^4)+o_P(\sigma_{n,1}^2/n)=n\bar\varepsilon^2+o_P(\sigma_{n,2}),
$$
if $\alpha_{\varepsilon}<1/2$ and (\ref{eq:b-cond-1}) holds.
Likewise, if (\ref{eq:b-cond-1b}) holds and
$\alpha_{\varepsilon}>1/2$,
$$\sum_{i=1}^n\Delta_i^2=n\bar\varepsilon^2+o_P(\sqrt{n})=O_P(\sigma_{n,1}^2/n)+o_P(\sqrt{n})=o_P(\sqrt{n}).$$
Also, from Section \ref{sec:proof-of-sum-delta} we obtain
\begin{equation}\label{eq:sum-Delta}
\sum_{i=1}^n\Delta_i=\sum_{i=1}^n\varepsilon_i+o_P(\sigma_{n,2}\vee\sqrt{n}).
\end{equation}
This finishes the proof. \hfill $\Box$

\subsection{Conjecture on error density estimation}\label{sec:density}
We consider again the parametric regression model $Y_i=\beta_0+\beta _! X_1+\varepsilon_i$. Our goal is to
estimate the error density $f_{\varepsilon}$.
We use the standard Parzen-Rosenblatt estimator
\begin{equation}\label{eq:density}
\hat
f_{h,\Delta}(x)=\frac{1}{nh}\sum_{i=1}^nK_h(x-\hat\varepsilon_i).
\end{equation}
\begin{conj}
Assume {\rm (P)} and {\rm (E)} and that (\ref{eq:hatbeta1}) holds.
Furthermore, assume that $F_{\eta}(\cdot)$ is 5 times differentiable with
bounded, continuous and integrable derivatives. If $\alpha_{\varepsilon}<1/2$ and
\begin{equation}\label{eq:bias}
nh^5\to 0, \qquad \sigma_{n,2}h\to \infty.
\end{equation}
then
$$
\frac{n}{\sigma_{n,2}}\left(\hat
f_{h,\Delta}(x)-f(x)\right) \convdistr f_{\varepsilon}^{(2)}(x) (Z_2-\frac{1}{2}Z_1^2).
$$
\end{conj}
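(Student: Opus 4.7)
The plan is to reduce the density estimation error to an integral against the residual empirical process $\hat K_n$, so that the weak limit of Corollary \ref{cor:par}(a) can be propagated through the kernel smoothing. Write $\mathcal{K}_h(u)=h^{-1}K(u/h)$ and let $\hat F_n$ denote the empirical distribution of the residuals, so that $\hat f_{h,\Delta}(x)=\int \mathcal{K}_h(x-u)\,d\hat F_n(u)$. Using $\hat F_n=F_\varepsilon+\hat K_n/n$, decompose
$$
\hat f_{h,\Delta}(x)-f_\varepsilon(x)=\bigl[(\mathcal{K}_h*f_\varepsilon)(x)-f_\varepsilon(x)\bigr]+\frac{1}{n}\int \mathcal{K}_h(x-u)\,d\hat K_n(u).
$$
The deterministic bracket is the classical Parzen-Rosenblatt bias, $\tfrac12\kappa_2 h^2 f_\varepsilon^{(2)}(x)+o(h^2)$, and $nh^5\to 0$ is exactly what makes it vanish after scaling by $n/\sigma_{n,2}$.

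For the stochastic term, integrate by parts in $u$ (boundary contributions vanish since $K$ has compact support):
$$
\frac{1}{n}\int \mathcal{K}_h(x-u)\,d\hat K_n(u)=\frac{1}{nh^2}\int K'\!\left(\frac{x-u}{h}\right)\hat K_n(u)\,du.
$$
Insert the uniform expansion that underlies the proof of Corollary \ref{cor:par}(a),
$$
\hat K_n(u)=f_\varepsilon^{(1)}(u)\varepsilon_{n,2}-\tfrac12 f_\varepsilon^{(1)}(u)\,n\bar\varepsilon^2+R_n(u),\qquad \sup_u|R_n(u)|=o_P(\sigma_{n,2}),
$$
change variables $v=(x-u)/h$, and integrate by parts once more in $v$ to move the derivative from $K$ onto $f_\varepsilon^{(1)}$. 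The two factors of $h$ produced by the two integrations by parts cancel the $h^{-2}$, leaving the principal contribution
$$
\frac{1}{n}\Bigl(\varepsilon_{n,2}-\tfrac12 n\bar\varepsilon^2\Bigr)\int K(v)\,f_\varepsilon^{(2)}(x-hv)\,dv\longrightarrow \frac{f_\varepsilon^{(2)}(x)}{n}\Bigl(\varepsilon_{n,2}-\tfrac12 n\bar\varepsilon^2\Bigr).
$$
Multiplying by $n/\sigma_{n,2}$ and using $\varepsilon_{n,2}/\sigma_{n,2}\convdistr Z_2$ together with $n\bar\varepsilon^2/\sigma_{n,2}=(\varepsilon_{n,1}/\sigma_{n,1})^2\,\sigma_{n,1}^2/(n\sigma_{n,2})\convdistr Z_1^2$ (the second identity via (\ref{eq:rate})), one obtains the advertised limit $f_\varepsilon^{(2)}(x)\bigl(Z_2-\tfrac12 Z_1^2\bigr)$.

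The genuine obstacle is the remainder $R_n$. The crude bound $\sup_u|R_n(u)|=o_P(\sigma_{n,2})$ supplied by Theorem \ref{thm:main} feeds into the kernel integral to give only
$$
\frac{1}{nh^2}\int \Bigl|K'\!\left(\frac{x-u}{h}\right)\Bigr|\,|R_n(u)|\,du=o_P\!\left(\frac{\sigma_{n,2}}{nh}\right),
$$
so after normalisation by $n/\sigma_{n,2}$ this contributes $o_P(1/h)$, which does \emph{not} vanish as $h\to 0$. Closing the gap therefore requires a genuinely third-order expansion of $\hat K_n$, with an explicit $-f_\varepsilon^{(2)}(x)\varepsilon_{n,3}$ term and a remainder that is $o_P(h\sigma_{n,2})$ uniformly in $x$. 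This in turn demands a third-order analogue of (\ref{eq:Wu-3}) (and hence differentiability of $F_\eta$ beyond order five), together with careful bookkeeping of the cubic interactions in $\Delta_i$ and of the $O_P(\sum_i\Delta_i^3)$ term in Theorem \ref{thm:main}. This missing refinement is exactly what is flagged at the end of Section 1 and in Section \ref{sec:density}, and it is precisely the reason the statement is posed as a conjecture rather than a theorem.
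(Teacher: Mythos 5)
Your argument is essentially the same heuristic the paper itself gives: reduce $\hat f_{h,\Delta}(x)-f_\varepsilon(x)$ to the bias plus $\frac{1}{n}\int \mathcal{K}_h(x-u)\,d\hat K_n(u)$, integrate by parts, substitute the expansion (\ref{eq:conclusion-1a}) to extract $\bigl(\varepsilon_{n,2}-\tfrac12 n\bar\varepsilon^2\bigr)\int K(v)f_\varepsilon^{(2)}(x-vh)\,dv$, and observe that the $o_P(\sigma_{n,2})$ remainder only yields $o_P(1/h)$ after normalisation, which is exactly the gap the paper flags as the reason this is a conjecture rather than a theorem. Your diagnosis that a sharper (third-order) bound on the remainder is needed matches the paper's closing remark, and you correctly write the limit with $f_\varepsilon^{(2)}$ (the paper's final display in its sketch has a typographical $f_\varepsilon^{(1)}$ that is inconsistent with the stated conjecture).
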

\begin{rem}{\rm 
The first part of (\ref{eq:bias}) is the standard condition which assures that a bias is negligible. As for the second part, note that
$$
\frac{n}{\sigma_{n,1}}\left(\hat
f_{h}(x)-f(x)\right) \convdistr f_{\varepsilon}^{(1)}(x) Z_1,
$$
where $\hat f_h$ is the Parzen-Rosenblatt estimator of $f_{\varepsilon}$ based on $\varepsilon_1,\ldots,\varepsilon_n$. The above result is valid if $\sigma_{n,1}h\to \infty$. In other words, $\sigma_{n,2}h\to\infty$ is a large bandwidth condition which assures that the estimator has LRD-type behaviour. Otherwise, if $\sigma_{n,2}h\to 0$ is should be expected that the rate of convergence is $\sqrt{nh}$. However, the methods of this paper are not applicable to such situation. The same applies to the case $\alpha_{\varepsilon}>1/2$. 

The reader is referred to \cite{Wu2002} and \cite[Section 3.2]{KulikWichelhaus2010} for precise results on kernel density estimation under long memory.
}
\end{rem}
{\it "Proof".}
Clearly $$n\left(\hat
f_{h,\Delta}(x)-{\rm E}[f_{\varepsilon}(x)]\right)=\frac{1}{h}\int K\left(\frac{x-v}{h}\right) d\hat K_n(x-vh). $$
Using (\ref{eq:conclusion-1a}), and integrating by parts we write the left hand side as
$$
\left(\varepsilon_{n,2}-\frac{1}{2}n\bar\varepsilon^2\right)\int K(v)f_{\varepsilon}^{(2)}(x-vh)dv +\frac{1}{nh}\int\left(\hat K_n(x-vh)-S_n(x-vh)\right)dK(v).
$$
Therefore, we expect 
$$
\frac{1}{\sigma_{n,2}}n\left(\hat
f_{h,\Delta}(x)-{\rm E}[f_{\varepsilon}(x)]\right) \convdistr f_{\varepsilon}^{(1)}(x) (Z_2-\frac{1}{2}Z_1^2) .
$$
This, however, requires a more precise $o_P$ bound in (\ref{eq:conclusion-1a}). 
\hfill $\Box$

\section{Simulation studies}\label{sec:simulation}
We conducted simulations justifying our results on asymptotic
behaviour of supremum of the empirical process of residuals
$\hat{K}_n(\cdot)$. First, we simulated $n=100$ i.i.d. random
variables $\ve_i$, $i=1,\ldots,n$ from $N(0,1)$ distribution. Then,
supremum $\sup_{x\in\mathbb{R}} K_n(x)$ was calculated. This
procedure was repeated 100 times. Quartiles and standard deviation
of the empirical distribution of the supremum was calculated. Next,
for the  same errors, model $Y_i=1+4X_i+\ve_i$ was considered, and
residuals were calculated using estimators of $\beta_0$, $\beta_1$
given in (\ref{eq:beta-estimator}). Also, for the same errors, we
assumed that $\beta_0=1$ is known. The same procedure was repeated
with errors following LRD Gaussian process with
$\alpha_{\varepsilon}\in \{0.2,0.4,0.6,0.8\}$. The results are given
in Table \ref{tab:1}.
\begin{itemize}
\item Column 3: For the empirical process $K_n$ based on errors, the variability of the supremum increases with the dependence, which is in agreement with
the asymptotic theory for the LRD-based empirical processes.
\item Column 4: We consider the empirical process $L_n$, where $F_{\varepsilon}(\cdot)$ is replaced with $F_{\varepsilon}(\cdot;\hat\theta_n)$, $\hat\theta_n$ being sample standard deviation based
on errors $\varepsilon_1,\ldots,\varepsilon_n$. The results are
similar to column 3. In other words, estimation of variance does not
influence asymptotic behaviour of the empirical process. This agrees
with theoretical results; see \cite[Remark 1.6]{Kulik2008b}. This
happens since variance can be estimated with rate $\sigma_{n,2}\vee
\sqrt{n}$, whereas the rate of convergence for $K_n(\cdot)$ is
$\sigma_{n,1}$.
\item Column 5: We consider the residual-based empirical process $\hat K_n$ in the linear regression model. Both slope and intercept are estimated.
We note that the variability for $\alpha_{\varepsilon}=0.8$ or
$\alpha_{\varepsilon}=0.6$ is almost the same as for i.i.d. case. In
other words, LRD does not play any role, which is in agreement with
Corollary \ref{cor:par}.
\item Column 6: Results for the residual-based empirical process $\hat L_n$ with estimated variance are similar as for $\hat K_n$. Recall that Corollary
\ref{cor:empirical-estimated} indicates that rates of convergence
for $\hat L_n$ is the same as for $\hat K_n$.
\item Column 7: We consider $\hat K_n$, but the intercept is assumed to be known. Results are similar to Column 3. In other words,
in case of known intercept the asymptotic behaviour of $\hat K_n$ is
similar to $K_n$; see Remark \ref{rem:changed-rates}.
\end{itemize}

\begin{table}[h]
\begin{center}
\begin{tabular}{||c||c||c|c|c|c|c|c||}\hline\hline
\T \B &  &$K_n$  & $L_n$ &$\hat K_n$& $\hat L_n$ & $\hat K_n$;
$\beta_0=1$ & $\hat L_n$; $\beta_0=1$ \\ \hline

&$Q_1$&0.0416  &  0.0392  & 0.0467  & 0.0413  & 0.0419  & 0.0376 \\

i.i.d. &$Q_3$ & 0.0880 & 0.0859   & 0.0656 & 0.0592 & 0.0873 &
0.0789\\

&$s$&0.0314 & 0.0315 & 0.0169 & 0.0146 & 0.0312 &
0.0313\\\hline\hline

& $Q_1$& 0.0307 & 0.0274 & 0.0473  & 0.0448 & 0.0346 &
0.0278 \\
$\alpha_{\varepsilon}=0.8$& $Q_3$&0.0994 & 0.0940 & 0.0686 & 0.0637
& 0.0963 &
0.0908 \\
&$s$&0.0484&0.0494 & 0.0149 &0.013  & 0.0494 &0.0504\\\hline\hline

& $Q_1$ & 0.0303 & 0.0150 & 0.0488 & 0.0447 & 0.0274  &
0.0147 \\
$\alpha_{\varepsilon}=0.6$ & $Q_3$&0.1285 & 0.1237 & 0.0718 & 0.0646
& 0.1303 &
0.1192 \\
& $s$&0.0758 & 0.0786 & 0.0151 & 0.0139 & 0.0772 &
0.0797\\\hline\hline

& $Q_1$&0.0062 & 0.0038 & 0.0504 & 0.0471 & 0.0072 &
0.0041 \\
$\alpha_{\varepsilon}=0.4$& $Q_3$&0.1471 & 0.1353 & 0.0784 & 0.0662
& 0.1479 &
0.1353 \\
&$s$&  0.0858 &0.0852 &0.0194 &0.0147 &0.0850 &0.0845\\\hline\hline

&$Q_1$& 0.0015 & 0.0023 & 0.0535 & 0.0418 & 0.0021 &
0.0017 \\
$\alpha_{\varepsilon}=0.2$ & $Q_3$ &0.2870 & 0.2714 & 0.0826 &
0.0645 & 0.2978 &
0.2770 \\
&$s$& 0.1911 &0.1851 & 0.0218 &0.0178 & 0.1906 &
0.1850\\\hline\hline

\end{tabular}
\end{center}
\caption{ Simulated values of different dispersion measures.
}\label{tab:1}
\end{table}

\section{Technical details}
Let ${\cal H}_i=\sigma(\eta_i,\eta_{i-1},\ldots)$. Let ${\bf
u}=(u_1,\ldots,u_n)$ be a vector of scalars. Define
$$
Z_n(x;{\bf u})=\sum_{i=1}^n\left(\mathbf{1}_{\{\varepsilon_i\le
x+u_i\}}-F_{\varepsilon}(x+u_i)\right) -
\sum_{i=1}^n\left(\mathbf{1}_{\{\varepsilon_i\le
x\}}-F_{\varepsilon}(x)\right).
$$
The process $Z_n(x;{\bf u})$ is written as
\begin{eqnarray}\label{eq:decomp-Zn}
\lefteqn{\qquad\qquad Z_n(x;{\bf u}) =  \sum_{i=1}^n\left(\mathbf{1}_{\{x<\varepsilon_i\le x+u_i\}}-\Exp\left[ \mathbf{1}_{\{x<\varepsilon_i\le x+u_i\}}|{\cal H}_{i-1}\right]\right) }\\
&&+ \sum_{i=1}^n \left(\Exp\left[ \mathbf{1}_{\{x<\varepsilon_i\le
x+u_i\}}|{\cal H}_{i-1}\right] - \Exp\left[
\mathbf{1}_{\{x<\varepsilon_i\le x+u_i\}}\right] \right)=:M_n(x;{\bf
u})+N_n(x;{\bf u}).\nonumber
\end{eqnarray}
Recall now that ${\bf\Delta}=(\Delta_1,\ldots,\Delta_n)$. Recalling
(\ref{eq:basic-decomposition}), we decompose
\begin{eqnarray}\label{eq:decomposition}
\lefteqn{\qquad\qquad\hat K_n(x)-K_n(x) =}\\
&=&
M_n(x;{\bf\Delta})+N_n(x;{\bf\Delta})+f_{\varepsilon}(x)\sum_{i=1}^n\Delta_i+
\frac{1}{2}f_{\varepsilon}^{(1)}(x)\sum_{i=1}^n\Delta_i^2+O\left(\sum_{i=1}^n\Delta_i^3\right).\nonumber
\end{eqnarray}
First, in Corollary \ref{cor:LRD} we will establish an asymptotic
expansion for the LRD part $N_n(x;{\bf\Delta})$. This will be done
by considering a special structure of $N_n(x;{\bf u})$ (see Lemma
\ref{lem:LRD} and (\ref{eq:LRD-2}) below) and then "replacing" ${\bf
u}$ with ${\bf\Delta}$ under proper assumptions for the latter.

Furthermore, we have to bound $M_n(x;{\bf\Delta})$. This will be
done by obtaining an uniform bound on $M_n(x;{\bf u})$. In this way,
we may utilize the martingale structure of the latter. Clearly,
$M_n(x;{\bf\Delta})$ is not a martingale. The bounds are given in
Lemma \ref{lem:mtg} and Lemma \ref{lem:mtg-2}.

\subsection{LRD part}
Denote ${\bf u}_0=u_0 {\bf 1}$, where ${\bf 1}$ is the vector of
dimension $n$, consisting of '1'. Recall that
$\xi_i=\varepsilon_i-\eta_i$ and $\xi_{n,r}$ is defined in the
analogous way as $\varepsilon_{n,r}$.

In the first lemma we deal with $N_n(x;{\bf u}_0)$. The proof is
included in Section \ref{sec:proof-1}.
\begin{lem}\label{lem:LRD}
Assume that $F_{\eta}(\cdot)$ is 3 times differentiable with
bounded, continuous and integrable derivatives. Then with some
$0<\nu<1/2$ and $\delta_n\to 0$,
\begin{equation}\label{eq:LRD}
\sup_{|u_0|\le
\delta_n^{1-\nu}}\sup_{x\in\mathbb{R}}\left|N_n(x;{\bf
u}_0)+f_{\varepsilon}^{(1)}(x)u_0
\xi_{n,1}\right|=O_P\left(\delta_n^{1-\nu}\left(\sigma_{n,2} \vee
\sqrt{n}\right)+\delta_n^{2(1-\nu)}\sigma_{n,1}\right).
\end{equation}
\end{lem}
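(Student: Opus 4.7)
The strategy is to condition out $\eta_i$, Taylor-expand in $u_0$, then apply a uniform-in-$x$ Appell-type reduction principle for the long-memory linear process $\{\xi_i\}$. Since $\eta_i$ is independent of $\mathcal H_{i-1}$ while $\xi_i$ is $\mathcal H_{i-1}$-measurable, the $i$-th summand of $N_n(x;{\bf u}_0)$ equals $G(x-\xi_i;u_0) - \Exp G(x-\xi;u_0)$, with $G(y;u_0) := F_\eta(y+u_0)-F_\eta(y)$. Because $F_\eta$ is thrice differentiable with bounded derivatives, Taylor expansion with integral remainder gives
$$G(y;u_0) = u_0 f_\eta(y) + \tfrac{u_0^2}{2} f_\eta'(y) + \int_0^{u_0} \tfrac{(u_0-r)^2}{2} f_\eta''(y+r)\,dr,$$
so, after centering and summing,
$$N_n(x;{\bf u}_0) = u_0 A_n(x) + \tfrac{u_0^2}{2} B_n(x) + R_n(x;u_0),$$
where $A_n(x):=\sum_{i=1}^n [f_\eta(x-\xi_i)-f_\varepsilon(x)]$, $B_n(x):=\sum_{i=1}^n [f_\eta'(x-\xi_i)-f_\varepsilon'(x)]$, and $R_n$ is the centered integrated third-order remainder.

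Each summand of $A_n(x)$ is a smooth functional $H_x(\xi_i):= f_\eta(x-\xi_i)$ of the long-memory linear process $\xi_i$ (which inherits memory parameter $\alpha_\varepsilon$ from $\varepsilon_i$); its first Appell coefficient equals $\Exp H_x'(\xi) = -\Exp f_\eta'(x-\xi) = -f_\varepsilon'(x)$. Adapting the uniform reduction principle of \cite[Thm.~2.2]{HoHsing1996} and \cite[Thm.~3]{Wu2003} from the indicator kernel $\mathbf 1_{\{\cdot\le x\}}$ to the smoother kernel $H_x$, one obtains
$$\sup_{x\in\mathbb R}\bigl|A_n(x)+f_\varepsilon'(x)\xi_{n,1}\bigr| = O_P\bigl(\sigma_{n,2}\vee\sqrt n\bigr).$$
Analogous (and easier) reductions give $\sup_x|B_n(x)| = O_P(\sigma_{n,1})$ and a uniform $O_P(\sigma_{n,1})$ bound for $\sum_i[f_\eta''(x+r-\xi_i)-\Exp f_\eta''(x+r-\xi)]$, uniformly in $r$ on the compact integration interval (requiring only boundedness of $f_\eta''$ and the rank-one Appell structure). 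Combining and using $|u_0|\le\delta_n^{1-\nu}$ yields
$$u_0 A_n(x) = -u_0 f_\varepsilon'(x)\xi_{n,1} + O_P\bigl(\delta_n^{1-\nu}(\sigma_{n,2}\vee\sqrt n)\bigr),\qquad \tfrac{u_0^2}{2}B_n(x) = O_P\bigl(\delta_n^{2(1-\nu)}\sigma_{n,1}\bigr),$$
and $R_n(x;u_0) = O_P(\delta_n^{3(1-\nu)}\sigma_{n,1})$, which is absorbed into the $B_n$ contribution since $\delta_n\to0$. Uniformity over $|u_0|\le\delta_n^{1-\nu}$ is automatic since $u_0$ enters only as polynomial prefactors; summing the three pieces yields the stated bound.

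The main obstacle is the uniform-in-$x$ reduction above: one must verify that after subtracting the rank-one Appell projection, the residual $\sup_x|A_n(x)+f_\varepsilon'(x)\xi_{n,1}|$ achieves the advertised second-order rate $\sigma_{n,2}$ (when $\alpha_\varepsilon<1/2$) or Gaussian rate $\sqrt n$ (when $\alpha_\varepsilon>1/2$). This parallels the chaining and variance arguments of \cite{Wu2003} for the empirical process $K_n$, the only modification being the replacement of the indicator kernel by the smooth kernel $H_x(z) = f_\eta(x-z)$; the assumed three bounded derivatives of $F_\eta$ are precisely what is needed to run the $x$-chaining together with the second-order Appell covariance bounds $\Cov(H_x(\xi_0), H_x(\xi_k)) = f_\varepsilon'(x)^2 \Cov(\xi_0,\xi_k) + O(k^{-2\alpha_\varepsilon})$.
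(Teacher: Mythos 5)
Your decomposition is sound and the intermediate claims are all true, but you take a genuinely different --- and heavier --- route at the pivotal step, and that step is only asserted, not proven. You reduce everything to a uniform-in-$x$ reduction principle for the smooth kernel $H_x(z)=f_\eta(x-z)$ applied to the long-memory process $\xi_i$, and propose to obtain it by ``adapting'' the chaining/variance arguments of Ho--Hsing and Wu from the indicator kernel. The paper avoids any new reduction principle: it writes $N_n(x;{\bf u}_0)=n\int\bigl(F_\eta(x+u_0-v)-F_\eta(x-v)\bigr)\,d(F_{n,\xi}-F_\xi)(v)$, integrates by parts, and adds and subtracts $f_\xi(v)\xi_{n,1}/n$ so that the whole expression becomes $\int \tilde S_{n,1}(v)\bigl(f_\eta(x+u_0-v)-f_\eta(x-v)\bigr)dv-\bigl(f_\varepsilon(x+u_0)-f_\varepsilon(x)\bigr)\xi_{n,1}$, where $\tilde S_{n,1}$ is the already-reduced empirical process of the $\xi_i$'s; the lemma then follows from $\sup_v|\tilde S_{n,1}(v)|=O_P(\sigma_{n,2}\vee\sqrt n)$ (a direct consequence of (\ref{eq:Wu}) and (\ref{eq:Wu-weak}) applied to the $\xi$'s), the integrability of $f_\eta^{(1)}$, and a mean-value bound in $u_0$. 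The same device closes the gap in your argument in one line: since $\int f_\xi(v)f_\eta^{(1)}(x-v)\,dv=f_\varepsilon^{(1)}(x)$, one has the exact identity
\begin{equation*}
A_n(x)+f_\varepsilon^{(1)}(x)\xi_{n,1}=\int \tilde S_{n,1}(v)\,f_\eta^{(1)}(x-v)\,dv ,
\end{equation*}
so $\sup_x|A_n(x)+f_\varepsilon^{(1)}(x)\xi_{n,1}|\le \|f_\eta^{(1)}\|_1\sup_v|\tilde S_{n,1}(v)|$ and no new Appell-kernel chaining is needed; the analogous identity with $f_\eta^{(2)}$ handles your $B_n$ and the integrated remainder. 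What your route would buy, if carried out, is a statement intrinsic to the smooth kernel (no appeal to the empirical process of the $\xi$'s); what the paper's route buys is that the only nontrivial input is the known uniform bound on $\tilde S_{n,1}$, with integrability of the derivatives of $f_\eta$ doing the rest. As written, your proof is incomplete precisely at the step you flag as the main obstacle; with the identity above substituted for the ``adaptation,'' it becomes essentially the paper's proof.
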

Note now that the part $N_n(x,{\bf u})$ in (\ref{eq:decomp-Zn}) can
be written as
$$
N_n(x;{\bf u})=\sum_{i=1}^n
\left(F_{\eta}(x+u_i-\xi_i)-F_{\eta}(x-\xi_i) -\Exp
F_{\eta}(x+u_i-\xi_i)+\Exp F_{\eta}(x-\xi_i)\right).
$$
Let us choose ${\bf u}={\bf u}_0+(u_{01},\ldots,u_{0n})$. If $\max_i
(|u_{0i}|)=o(\delta_n)$, then applying first order Taylor expansion,
and noting that $\xi_i$, $i\ge 1$, is LRD moving average with the
same properties as $\varepsilon_i$, $i\ge 1$,
$$
N_n(x;{\bf u})-N_n(x;{\bf u}_0)=o(\delta_n)\sum_{i=1}^n
\left(f_{\eta}(x+u_0-\xi_i)- {\rm
E}f_{\eta}(x+u_0-\xi_i)\right)=o_P(\delta_n\sigma_{n,1}),
$$
uniformly in $u, u_0$ and $x$, since $f_{\eta}^{(1)}$ is bounded and
integrable. Combining this with (\ref{eq:LRD}), we have (recall
$\nu<1/2$)
\begin{equation}\label{eq:LRD-2}
\sup_{{\bf u}}\sup_{x\in\mathbb{R}}\left|N_n(x;{\bf
u})+f_{\varepsilon}^{(1)}(x)u_0
\xi_{n,1}\right|=O_P(\delta_n^{1-\nu}\left(\sigma_{n,2}\vee
\sqrt{n}\right))+o_P\left(\delta_n\sigma_{n,1}\right),
\end{equation}
where $\sup_{{\bf u}}$ is taken over all ${\bf u}$ such that
$$
{\bf u}={\bf u}_0+(u_{01},\ldots,u_{0n}), \qquad \max_i
(|u_{0i}|)=o(\delta_n), \qquad |u_0|=O(\delta_n^{1-\nu}).
$$
In this way we end up with the following corollary.
\begin{cor}\label{cor:LRD}
Assume that $F_{\eta}(\cdot)$ is 3 times differentiable with
bounded, continuous and integrable derivatives. Assume that
${\bf\Delta}$ can be written as $\Delta_0{\bf
1}+(\Delta_{01},\ldots,\Delta_{0n})$, where
$$\Delta_0=o_P(\delta_n^{1-\nu}), \qquad \max_i \Delta_{0i}=o_P(\delta_n).$$
Then
\begin{equation}\label{eq:LRD-3}
\sup_{x\in\mathbb{R}}\left|N_n(x;{\bf
\Delta})+f_{\varepsilon}^{(1)}(x)\Delta_0
\xi_{n,1}\right|=O_P(\delta_n^{1-\nu}\left(\sigma_{n,2}\vee\sqrt{n}\right))+o_P\left(\delta_n\sigma_{n,1}\right).
\end{equation}

\end{cor}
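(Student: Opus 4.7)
The plan is to derive Corollary \ref{cor:LRD} from the uniform deterministic estimate (\ref{eq:LRD-2}) by a routine good-event argument. The content of (\ref{eq:LRD-2}) is a bound that holds simultaneously over all deterministic ${\bf u}=u_0{\bf 1}+(u_{01},\ldots,u_{0n})$ with $|u_0|=O(\delta_n^{1-\nu})$ and $\max_i|u_{0i}|=o(\delta_n)$; the corollary is the analogous statement when ${\bf u}$ is replaced by the random vector ${\bf\Delta}$ whose components satisfy the corresponding size conditions only in probability.

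First I would select a deterministic sequence $c_n\to 0$ slowly enough that the events
$$
A_n := \bigl\{|\Delta_0|\le c_n\delta_n^{1-\nu}\bigr\}\cap \bigl\{\max_i|\Delta_{0i}|\le c_n\delta_n\bigr\}
$$
satisfy $P(A_n)\to 1$. Such a $c_n$ exists by the standard device of extracting a common slow rate from the two $o_P$ hypotheses on $\Delta_0$ and on $\max_i|\Delta_{0i}|$. On $A_n$, the random vector ${\bf\Delta}$ lies inside the deterministic admissible class over which the supremum in (\ref{eq:LRD-2}) is taken.

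Second, on the same event I would bound the quantity of interest pointwise by the supremum:
$$
\sup_{x}\bigl|N_n(x;{\bf\Delta})+f_{\varepsilon}^{(1)}(x)\Delta_0\xi_{n,1}\bigr|\;\le\;\sup_{{\bf u}}\sup_x\bigl|N_n(x;{\bf u})+f_{\varepsilon}^{(1)}(x)u_0\xi_{n,1}\bigr|,
$$
where the outer supremum on the right is restricted to the admissible class. By (\ref{eq:LRD-2}) the right hand side is $O_P(\delta_n^{1-\nu}(\sigma_{n,2}\vee\sqrt{n}))+o_P(\delta_n\sigma_{n,1})$, and since $P(A_n^c)\to 0$ the same stochastic bound transfers unconditionally to the left hand side, which is exactly (\ref{eq:LRD-3}).

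I do not expect a genuine obstacle. The substantive work has already been carried out upstream, in Lemma \ref{lem:LRD} and in the Taylor argument that produced (\ref{eq:LRD-2}); the corollary is essentially a measurability and bookkeeping consequence. The only mildly delicate point is the joint choice of the sequence $c_n$ above, which is the standard diagonal trick available whenever two sequences of random variables converge to zero in probability.
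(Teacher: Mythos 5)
Your proposal is correct and follows essentially the same route as the paper: the authors explicitly describe the proof of Corollary \ref{cor:LRD} as ``replacing'' ${\bf u}$ with ${\bf\Delta}$ in the uniform bound (\ref{eq:LRD-2}), which is exactly your good-event substitution. Your write-up is in fact slightly more careful than the paper's (which omits the $c_n$ diagonal device and simply asserts the corollary), but there is no difference in substance.
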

Noting that for $\alpha_{\varepsilon}<1/2$ we have
$\xi_{n,1}-\varepsilon_{n,1}=o_P(\sigma_{n,2})$, we may replace
$\xi_{n,1}$ with $\varepsilon_{n,1}$ in the statement of Theorem
\ref{thm:main}.
\subsubsection{Proof of Lemma \ref{lem:LRD}}\label{sec:proof-1} Let $F_{n,\xi}(\cdot)$ be an empirical
distribution function, associated with $\xi_1,\ldots,\xi_n$ and let
$F_{\xi}(\cdot)$, $f_{\xi}(\cdot)$ be, respectively, distribution
and density function of any of $\xi_i$. Note that $\xi_i$ and
$\eta_i$ are independent for each fixed $i$, and
$f_{\xi}*f_{\eta}=f_{\varepsilon}$. Recall that $\xi_{n,r}$ is
defined in the analogous way as $\varepsilon_{n,r}$; see
(\ref{eq:enr}). From (\ref{eq:var-enr}) we obtain that
$\xi_{n,1}=O_P(\sigma_{n,1})$.

Furthermore, let
$$
\tilde S_{n,p}(x)=\sum_{i=1}^n\left(\mathbf{1}_{\{\xi_i\le
x\}}-F_{\xi}(x)\right)+\sum_{r=1}^p(-1)^{r-1}F_{\xi}^{(r)}(x)\xi_{n,r}.
$$
Note that $\tilde S_{n,p}$ is defined in the same way as $S_{n,p}$,
but we use $\xi_i$'s in the former instead of $\varepsilon_i$'s in
the latter. Nevertheless, we conclude from (\ref{eq:Wu}) and
(\ref{eq:Wu-weak}) that for $\alpha_{\varepsilon}<1/2$,
\begin{equation}\label{eq:Wu-tilde}
\sigma_{n,2}^{-1}\tilde S_{n,1}(x)\convweak f_{\xi}^{(1)}(x)Z_2,
\end{equation}
where $Z_2$ is the same random variable as in (\ref{eq:enr-distr}).
Otherwise, if $\alpha_{\varepsilon}>1/2$, then
\begin{equation}\label{eq:Wu-weak-tilde}
n^{-1/2}\tilde S_{n,1}(x)\convweak \Psi(x),
\end{equation}
where $\Psi$ is a Gaussian process and the convergence is in the
Skorokhod topology.

We compute
\begin{eqnarray*}
\lefteqn{N_n(x;{\bf u}_0)= n\int \left(F_{\eta}(x+u_0-v)-F_{\eta}(x-v)\right)d(F_{n,\xi}(v)-F_{\xi}(v))} \\
&=&n\int \left(F_{n,\xi}(v)-F_{\xi}(v)\right)\left(f_{\eta}(x+u_0-v)-f_{\eta}(x-v)\right)\; dv\\
&=& n\int \left(F_{n,\xi}(v)-F_{\xi}(v)+f_{\xi}(v)\xi_{n,1}/n\right)\left(f_{\eta}(x+u_0-v)-f_{\eta}(x-v)\right)\; dv\\
 &&- \left(f_{\varepsilon}(x+u_0)-f_{\varepsilon}(x) \right)\xi_{n,1}\\
&=& \int \tilde S_{n,1}(v)\left( f_{\eta}(x+u_0-v)
-f_{\eta}(x-v)\right)\; dv- f_{\varepsilon}^{(1)}(x)u_0
\xi_{n,1}+O(u_0^2)\xi_{n,1}\\
&=& \int \tilde S_{n,1}(v) f_{\eta}^{(1)}(x-v)u_0(v)\; dv-
f_{\varepsilon}^{(1)}(x)u_0 \xi_{n,1}+O(u_0^2)\xi_{n,1},
\end{eqnarray*}
where $u_0(v)$ lies between $x-v$ and $x+u_0-v$. From (\ref{eq:Wu})
and (\ref{eq:Wu-weak}) we conclude that $\sup_v|\tilde
S_{n,1}(v)|=O_P(\sigma_{n,2}\vee \sqrt{n})$. Therefore, with a
$1>\nu>0$,
$$
\sup_{|u_0|\le \delta_n^{1-\nu}}\sup_x\left|N_n(x;{\bf
u}_0)+f_{\varepsilon}^{(1)}(x)u_0
\xi_{n,1}\right|=O_P\left(\delta_n^{1-\nu}(\sigma_{n,2}\vee
\sqrt{n})+\delta_n^{2(1-\nu)}\sigma_{n,1}\right).
$$
\hfill $\Box$
\subsection{Martingale part}\label{sec:mtg-part}
The proofs for martingale part are standard, in particular, they are
similar as in \cite{ChanLing2008}. However, some details are
different, since the main theorems involve non-standard scalings
$n^{-1/2}$ and $\sigma_{n,2}^{-1}$, rather than $\sigma_{n,1}^{-1}$.
\begin{lem}\label{lem:mtg}
Assume that $\|f_{\eta}\|_{\infty}<\infty$.
\begin{itemize}
\item[{\rm (a)}]
Let $x_r=r  {1\over \sigma_{n,2}}$. If $\alpha_{\varepsilon}<1/2$
and {\rm (\ref{eq:cond-1a})} holds, then
$$
\sup_{\bf u}\max_{r\in\mathbb{Z} }|M_n(x_r;{\bf
u})|=o_P(\sigma_{n,2}).
$$
\item[{\rm (b)}]
Let $x_r=r  {\epsilon\over \sqrt{n}}$ with $\epsilon>0$. If
$\alpha_{\varepsilon}>1/2$ and {\rm (\ref{eq:cond-1b})} holds, then
$$
\sup_{\bf u}\max_{r\in\mathbb{Z} }|M_n(x_r;{\bf u})|=o_P(\sqrt{n}).
$$
\end{itemize}
In both cases $\sup_{{\bf u}}$ is taken over all ${\bf u}$ such that
\begin{equation}\label{eq:cond-2}
{\bf u}={\bf u}_0+(u_{01},\ldots,u_{0n}), \qquad \max_i
(|u_{0i}|)=o(\delta_n), \qquad |u_0|=O(\delta_n^{1-\nu}).
\end{equation}
\end{lem}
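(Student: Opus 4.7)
The plan is to exploit the ${\cal H}_i$-martingale difference structure of $M_n(x;{\bf u})=\sum_{i=1}^n D_i$, where
$D_i=\mathbf{1}_{\{\varepsilon_i\le x+u_i\}}-\mathbf{1}_{\{\varepsilon_i\le x\}}-\Ex[\,\cdot\mid{\cal H}_{i-1}]$, combined with a Bernstein/Freedman-type exponential inequality and a careful discretization. Writing $\varepsilon_i=\eta_i+\xi_i$ with $\eta_i$ independent of ${\cal H}_{i-1}$ and $\xi_i\in{\cal H}_{i-1}$, the conditional variance sums to
\[
\sum_{i=1}^n\Ex[D_i^2\mid{\cal H}_{i-1}]\le \sum_{i=1}^n|F_\eta(x+u_i-\xi_i)-F_\eta(x-\xi_i)|\le \|f_\eta\|_\infty\sum_{i=1}^n|u_i|=O(n\delta_n^{1-\nu})
\]
uniformly over ${\bf u}$ satisfying (\ref{eq:cond-2}), while each $|D_i|\le 1$. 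Freedman's inequality yields, for fixed $x$ and ${\bf u}$,
\[
P(|M_n(x;{\bf u})|>\lambda)\le 2\exp\!\left(-\frac{c\lambda^2}{n\delta_n^{1-\nu}+\lambda}\right).
\]

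Next I would handle the maximum over $r$. Because $\Ex|\varepsilon|<\infty$, with probability tending to one all $\varepsilon_i,\xi_i$ lie in $[-T_n,T_n]$ for some slowly growing $T_n$ (say $T_n=n^\kappa$, $\kappa$ small); outside a slightly larger window $M_n(x_r;{\bf u})\equiv 0$. The effective grid therefore has $O(T_n\sigma_{n,2})$ points in part (a) and $O(T_n\sqrt{n}/\epsilon)$ in part (b). Choosing $\lambda=\epsilon\sigma_{n,2}$ (resp.\ $\epsilon\sqrt{n}$), the Freedman exponent is of order $\sigma_{n,2}^2/(n\delta_n^{1-\nu})$ (resp.\ $1/\delta_n^{1-\nu}$), and a short computation shows that (\ref{eq:cond-1a}) (resp.\ (\ref{eq:cond-1b})) makes this grow faster than $\log T_n+\log\sigma_{n,2}$, so the union bound over $r$ vanishes.

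The main obstacle is the supremum over the $n$-dimensional class of ${\bf u}$. I would decompose $M_n(x;{\bf u})=M_n(x;u_0\mathbf{1})+R_n(x;{\bf u})$, where $R_n$ isolates the perturbation $(u_{01},\ldots,u_{0n})$ with $\max_i|u_{0i}|=o(\delta_n)$. For the scalar part the Bernstein bound above, coupled with the monotone ``oscillation majorant'' $\sum_i(\mathbf{1}_{\{|\varepsilon_i-x-u_0|\le\mathrm{mesh}\}}-\Ex[\cdot\mid{\cal H}_{i-1}])$ (itself a bounded martingale, treated with the same inequality), controls the gap between consecutive mesh points of a one-dimensional $u_0$-grid of width $\sigma_{n,2}^{-1}$ (resp.\ $n^{-1/2}$). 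For the perturbation piece the conditional variance improves to $\|f_\eta\|_\infty\sum_i|u_{0i}|=o(n\delta_n)$, which is strictly smaller than the scalar bound; termwise $|Y_i|\le\mathbf{1}_{\{|\varepsilon_i-x-u_0|\le\max_j|u_{0j}|\}}+|u_{0i}|\|f_\eta\|_\infty$ together with the martingale maximal inequality gives $R_n=o_P(\sqrt{n\delta_n})$. The delicate point to make rigorous is that the implicit entropy in the $n$-dimensional perturbation is dominated by the Freedman exponent; this is precisely where the quantitative content of (\ref{eq:cond-1a}) and (\ref{eq:cond-1b}) is used.
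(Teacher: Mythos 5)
Your route is genuinely different from the paper's. The paper does not use an exponential inequality or any truncation of the real line: it applies Rosenthal's inequality to get a fourth-moment bound $\Ex|M_n(x_r;\mathbf{u})|^4\le Cn\sum_i\Ex[(H_i^+(x_r)-H_i^-(x_r))^2]+C\sum_i\Ex[H_i^+(x_r)-H_i^-(x_r)]$, and then sums these bounds over \emph{all} $r\in\mathbb{Z}$, controlling the infinite sum by the monotonicity of $H_i^{\pm}$ and the identity $\int(H_i^+(x)-H_i^-(x))\,dx=2|u_i|$, which gives $\sum_{r}\Ex[H_i^+(x_r)-H_i^-(x_r)]\le 2+2\sigma_{n,2}|u_i|$. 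Markov's inequality applied to $\sum_r\Ex M_n^4(x_r;\mathbf{u})$ then yields a bound depending on $\mathbf{u}$ only through $\sum_i|u_i|$ and $\sum_i u_i^2$, and (\ref{eq:cond-1a}) (resp.\ (\ref{eq:cond-1b})) is exactly what makes $\sigma_{n,2}^{-4}\{n\sum_i|u_i|+n\sigma_{n,2}\sum_iu_i^2+n+\sigma_{n,2}\sum_i|u_i|\}\to0$. Your Freedman-type bound would, if pushed through, give stronger (exponential) tail control, but the polynomial moment bound is all that is needed here and it sidesteps both of the issues below.

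Two concrete gaps. First, your truncation step is wrong as stated: outside the window containing all the data, $M_n(x_r;\mathbf{u})$ is \emph{not} identically zero, because the compensator $\sum_i(F_\eta(x_r+u_i-\xi_i)-F_\eta(x_r-\xi_i))$ does not vanish for any $x_r$ when $f_\eta$ has unbounded support; its pointwise bound $\|f_\eta\|_\infty\sum_i|u_i|=O(n\delta_n^{1-\nu})$ is in general \emph{not} $o(\sigma_{n,2})$ under (\ref{eq:cond-1a}) (e.g.\ with $\delta_n\asymp\sigma_{n,2}^{3/2}/n$ one gets $n\delta_n^{1-\nu}\asymp n^{\nu}\sigma_{n,2}^{3(1-\nu)/2}\gg\sigma_{n,2}$), so the tail grid points cannot simply be discarded. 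You would need to recover control of the tail by integrating the compensator over $x$ --- which is precisely the paper's summation-over-$r$ device --- at which point the truncation buys you nothing. Second, your treatment of $\sup_{\mathbf{u}}$ is only a programme: you correctly identify the $n$-dimensional class of perturbations as the main obstacle, but the step where "the implicit entropy \ldots is dominated by the Freedman exponent" is exactly the assertion that needs proof, and it is not supplied. Note that the paper's resolution is much cheaper: since its final probability bound depends on $\mathbf{u}$ only through $\sum_i|u_i|$ and $\sum_iu_i^2$, which are controlled uniformly over the class (\ref{eq:cond-2}), no chaining in $\mathbf{u}$ is attempted at all. Finally, your claim that the Freedman exponent $\sigma_{n,2}^2/(n\delta_n^{1-\nu})$ beats $\log(T_n\sigma_{n,2})$ under (\ref{eq:cond-1a}) is not a "short computation": it holds only after choosing $\nu$ sufficiently small (depending on $\alpha_\varepsilon$), and this should be made explicit since $\nu$ also enters Lemma \ref{lem:LRD} and condition (\ref{eq:cond-2}).
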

Let
\begin{equation}\label{eq:def-An}
A_n(x;y)= \sum_{i=1}^n\left(\mathbf{1}_{\{\ve_i\leq y\}}
-F_\ve(y)-(\mathbf{1}_{\{\ve_i\leq x\}}-F_\ve(x))\right).
\end{equation}
The next lemma establishes tightness-like property of the empirical
process based on $\varepsilon_i$, $i\ge 1$. Note, however, that it
cannot be concluded directly from the tightness of
$\sigma_{n,1}^{-1}K_n(\cdot)$, since the different scaling is
involved.
\begin{lem}\label{lem:tightness}
Assume that $\|f_{\eta}\|_{\infty}<\infty$.
\begin{itemize}
\item If $\alpha_{\varepsilon}<1/2$, then
$ \sup_{|y-x|\le\sigma_{n,2}^{-1}}|A_n(x;y)|=o_P(\sigma_{n,2}). $
\item
If $\alpha_{\varepsilon}>1/2$, then $ \sup_{|y-x|\le\epsilon
n^{-1/2}}|A_n(x;y)|=O_P(\epsilon n^{-1/2}). $
\end{itemize}
\end{lem}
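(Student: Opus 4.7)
The plan is to decompose $A_n(x;y)=K_n(y)-K_n(x)$ via the asymptotic expansions recalled in Section~\ref{sec:errors-properties}, and then trade the Lipschitz continuity of $f_\ve$, $f'_\ve$ (inherited from the assumption on $F_\eta$) against the small width of the $(x,y)$-window. Since $F_\eta$ is three times differentiable with bounded derivatives, so is $F_\ve$, so that $f_\ve$ and $f'_\ve$ are Lipschitz with constants $\|f'_\ve\|_\infty$ and $\|f''_\ve\|_\infty$ respectively.

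For part~(a), rewriting the definition of $S_{n,2}$ gives $K_n(x)=S_{n,2}(x)-f_\ve(x)\varepsilon_{n,1}+f'_\ve(x)\varepsilon_{n,2}$, hence
\[
A_n(x;y) = \bigl(S_{n,2}(y)-S_{n,2}(x)\bigr) - \bigl(f_\ve(y)-f_\ve(x)\bigr)\varepsilon_{n,1} + \bigl(f'_\ve(y)-f'_\ve(x)\bigr)\varepsilon_{n,2}.
\]
The first summand is bounded uniformly by $2\sup_x|S_{n,2}(x)|$, which is $o(\sigma_{n,2})$ almost surely by (\ref{eq:Wu-3}). The remaining two terms are controlled by combining the Lipschitz bounds, the window size $|y-x|\le\sigma_{n,2}^{-1}$, and the orders $\varepsilon_{n,1}=O_P(\sigma_{n,1})$, $\varepsilon_{n,2}=O_P(\sigma_{n,2})$ arising from (\ref{eq:enr-distr})--(\ref{eq:var-enr}); they are of orders $O_P(\sigma_{n,1}/\sigma_{n,2})$ and $O_P(1)$ respectively. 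A direct check using (\ref{eq:var-enr}) shows $\sigma_{n,1}/\sigma_{n,2}^2\sim n^{3\alpha_\ve/2-1}L_0^{-2}(n)\to 0$ whenever $\alpha_\ve<2/3$, which is comfortably satisfied under the hypothesis $\alpha_\ve<1/2$, so both remainders are $o_P(\sigma_{n,2})$ and part~(a) follows.

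For part~(b), the analogous decomposition with $p=1$ gives
\[
A_n(x;y) = \bigl(S_{n,1}(y)-S_{n,1}(x)\bigr) - \bigl(f_\ve(y)-f_\ve(x)\bigr)\varepsilon_{n,1},
\]
and the second term is immediately of order $O_P(\epsilon n^{-1/2}\sigma_{n,1})$ by Lipschitz continuity of $f_\ve$. For the first term I invoke (\ref{eq:Wu-weak}), according to which $n^{-1/2}S_{n,1}(\cdot)$ converges weakly to the Gaussian process $W_1(\cdot)$ whose covariance structure, spelled out in \cite{Wu2003}, has enough regularity to produce a modulus-of-continuity estimate that, after rescaling, bounds the increment of $S_{n,1}$ over windows of width $\epsilon n^{-1/2}$ at the required order.

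The main obstacle lies in part~(b): unlike (\ref{eq:Wu-3}), which offers a uniform almost-sure bound for $S_{n,2}$ for free, in the regime $\alpha_\ve>1/2$ one only has weak convergence of $S_{n,1}$. To convert this into a genuine probabilistic increment bound one must compute the second moment of $S_{n,1}(y)-S_{n,1}(x)$ directly from the moving-average representation of $\{\varepsilon_i\}$ and then pass from pointwise estimates to the supremum either by a chaining argument or by a finite-grid discretisation combined with the monotonicity of $K_n$; this second-moment-plus-chaining step concentrates essentially all of the technical work.
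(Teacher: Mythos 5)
Your route is genuinely different from the paper's, and the difference matters. The paper never expands $K_n$: it splits $A_n(x;y)$ into a martingale part $\tilde M_n$ and a conditional-expectation part $\tilde N_n$, exactly as in (\ref{eq:decomp-Zn}), bounds the latter deterministically by $n\|f_\eta+f_\varepsilon\|_\infty|y-x|$ (which is $o(\sigma_{n,2})$, resp.\ $O(\epsilon\sqrt n)$, for the two window widths, using $n/\sigma_{n,2}^2\to 0$ when $\alpha_\varepsilon<1/2$), and then disposes of $\tilde M_n$ by a crude uniform sup bound for $\alpha_\varepsilon<1/2$ and by \cite[Lemma 14]{Wu2003} for $\alpha_\varepsilon>1/2$. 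That argument needs only $\|f_\eta\|_\infty<\infty$, which is all the lemma assumes, whereas you import three derivatives of $F_\eta$ from the surrounding theorems.

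There are two genuine gaps in your version. First, in part (a) you invoke (\ref{eq:Wu-3}) to get $\sup_x|S_{n,2}(x)|=o(\sigma_{n,2})$, but the paper states (\ref{eq:Wu-3}) only for $\alpha_\varepsilon>1/3$; for $\alpha_\varepsilon\le 1/3$ the second-order uniform reduction is not available as cited, so your argument covers only $1/3<\alpha_\varepsilon<1/2$ unless you pass to a higher-order expansion with correspondingly stronger smoothness hypotheses. Second, and more seriously, part (b) is not actually proved: after peeling off the Lipschitz term, what remains is precisely $\sup_{|y-x|\le\epsilon n^{-1/2}}|S_{n,1}(y)-S_{n,1}(x)|$, and an increment bound at the shrinking scale $n^{-1/2}$ \emph{is} the content of the lemma. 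Weak convergence (\ref{eq:Wu-weak}) does not deliver it: tightness controls increments over windows of \emph{fixed} width $\delta$ as $\delta\to0$, not a bound of order $\epsilon\sqrt n$ that is linear in $\epsilon$ at scale $n^{-1/2}$ (note the target here must be read as $O_P(\epsilon n^{1/2})$, consistently with its use in Lemma \ref{lem:mtg-2}; a pointwise computation already shows $A_n(x;y)$ is of exact order $\epsilon\sqrt n$, so the displayed exponent is a misprint). You correctly identify that a second-moment-plus-chaining computation would be needed, but you do not carry it out, and that step is where all the difficulty sits. The paper sidesteps it entirely: the LRD part is killed by the trivial deterministic Lipschitz estimate (\ref{eq:3}), and the martingale part is exactly what \cite[Lemma 14]{Wu2003} bounds off the shelf.
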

Combining Lemmas \ref{lem:mtg} and \ref{lem:tightness} we obtain the
following uniform behaviour of the martingale part.
\begin{lem}\label{lem:mtg-2}
Under the conditions of Lemma \ref{lem:mtg} we have
$$
\sup_{\bf u}\sup_{x\in\mathbb{R}}|M_n(x;{\bf
u})|=o_P(\sigma_{n,2})+O_P(\epsilon\sqrt{n}).
$$
\end{lem}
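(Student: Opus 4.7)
The plan is a discretization-plus-oscillation argument combining Lemma \ref{lem:mtg} (grid control) with Lemma \ref{lem:tightness} (small-interval empirical-process control). I treat case (a) in detail; case (b) follows by the same scheme with grid spacing $\epsilon/\sqrt{n}$ in place of $\sigma_{n,2}^{-1}$, producing the $O_P(\epsilon\sqrt n)$ term.

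For any $x\in\mathbb{R}$, let $x_r$ denote the nearest grid point, so $|x-x_r|\le 1/(2\sigma_{n,2})$. The martingale increment $M_n(x;{\bf u})-M_n(x_r;{\bf u})=\sum_{i=1}^n D_i$ consists of the martingale differences
$$D_i=\bigl(\mathbf{1}_{\{x<\varepsilon_i\le x+u_i\}}-\mathbf{1}_{\{x_r<\varepsilon_i\le x_r+u_i\}}\bigr)-\Exp\bigl[\,\cdot\mid{\cal H}_{i-1}\bigr],$$
and the triangle inequality gives $|D_i|\le\mathbf{1}_{\{\varepsilon_i\in J_i^{(1)}\cup J_i^{(2)}\}}+\Exp[\mathbf{1}_{\{\varepsilon_i\in J_i^{(1)}\cup J_i^{(2)}\}}\mid{\cal H}_{i-1}]$, where $J_i^{(1)}=(\min(x,x_r),\max(x,x_r)]$ is independent of $i$ and $J_i^{(2)}=(\min(x+u_i,x_r+u_i),\max(x+u_i,x_r+u_i)]$ is a translate of $J_i^{(1)}$; both have length at most $(2\sigma_{n,2})^{-1}$.

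The conditional-expectation term is dispatched pointwise: since $\Exp[\mathbf{1}_{\{y<\varepsilon_i\le y+u_i\}}\mid{\cal H}_{i-1}]=F_\eta(y+u_i-\xi_i)-F_\eta(y-\xi_i)$ and $\|f_\eta\|_\infty<\infty$, its variation between $y=x_r$ and $y=x$ is at most $\|f_\eta\|_\infty|x-x_r|$, so the sum is $O(n/\sigma_{n,2})=O(n^{\alpha_\varepsilon})=o(\sigma_{n,2})$ for $\alpha_\varepsilon<1/2$. The $J_i^{(1)}$-indicator sum equals $A_n(\min(x,x_r);\max(x,x_r))+n[F_\varepsilon(\max)-F_\varepsilon(\min)]$; Lemma \ref{lem:tightness} handles the first and boundedness of $f_\varepsilon$ the second. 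For the $J_i^{(2)}$-indicator sum I first absorb the individual perturbations $u_{0i}$: since $\max_i|u_{0i}|=o(\delta_n)$, each $J_i^{(2)}$ differs from the common $J^{(2)}=(\min(x+u_0,x_r+u_0),\max]$ by a set of Lebesgue measure $o(\delta_n)$, producing (via Lemma \ref{lem:tightness} again) a negligible empirical-count remainder under (\ref{eq:cond-1a}). The bulk term $\sum_i\mathbf{1}_{\{\varepsilon_i\in J^{(2)}\}}$ is then an empirical count on a single interval of length $O(\sigma_{n,2}^{-1})$, bounded just as $J_i^{(1)}$ was. Crucially, every bound is uniform in ${\bf u}$, because the suprema to which the error terms are reduced no longer involve ${\bf u}$.

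Combining with Lemma \ref{lem:mtg},
$$\sup_{\bf u}\sup_x|M_n(x;{\bf u})|\le\sup_{\bf u}\max_r|M_n(x_r;{\bf u})|+\sup_{\bf u}\sup_{|x-x_r|\le\sigma_{n,2}^{-1}}|M_n(x;{\bf u})-M_n(x_r;{\bf u})|=o_P(\sigma_{n,2}),$$
and the identical argument under (\ref{eq:cond-1b}) with grid spacing $\epsilon/\sqrt n$ yields the $O_P(\epsilon\sqrt n)$ term in case (b). The main obstacle is precisely the ${\bf u}$-uniformity; the key trick is to decouple ${\bf u}$ from the empirical process by absorbing its common component $u_0$ into the endpoints and routing its small individual perturbations $u_{0i}$ into a negligible remainder, which is exactly what allows the endpoint-free bound of Lemma \ref{lem:tightness} to be invoked.
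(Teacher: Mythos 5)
Your overall architecture (grid from Lemma \ref{lem:mtg}, oscillation between grid points from Lemma \ref{lem:tightness}, smooth conditional-expectation term bounded by $n\|f_\eta\|_\infty|x-x_r|$) matches the paper's, but your treatment of the oscillation is genuinely different and contains a gap at exactly the point you flag as the main obstacle. The paper does not bound $|M_n(x;{\bf u})-M_n(x_r;{\bf u})|$ by the triangle inequality; it uses monotone bracketing: for $x\in[x_r,x_{r+1})$ it sandwiches $\mathbf{1}_{\{\varepsilon_i\le x+u_i\}}$ between $\mathbf{1}_{\{\varepsilon_i\le x_r+u_i\}}$ and $\mathbf{1}_{\{\varepsilon_i\le x_{r+1}+u_i\}}$, so that the indicator at level $x+u_i$ is absorbed into the grid-point martingales $a_i(x_r)$, $a_i(x_{r+1})$ already controlled by Lemma \ref{lem:mtg}, and the only leftover oscillation terms are the ${\bf u}$-free increment $A_n(x;x_{r+1})$ and the smooth term $B_n=\sum_i\left(F_\eta(x_{r+1}-\xi_i+u_i)-F_\eta(x-\xi_i+u_i)\right)\le n\|f_\eta\|_\infty/\sigma_{n,2}$. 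No $i$-dependent small-interval count near $x+u_i$ ever appears.

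In your version such a count does appear, and your disposal of it fails quantitatively. You reduce $\sum_i\mathbf{1}_{\{\varepsilon_i\in J_i^{(2)}\}}$ to a common interval $J^{(2)}$ plus a remainder supported on $\bigcup_i\bigl(J_i^{(2)}\triangle J^{(2)}\bigr)$, and the only ${\bf u}$-uniform envelope for that union is a fixed set of Lebesgue measure of order $\max_i|u_{0i}|=o(\delta_n)$. The raw empirical count on such a set has deterministic part $n\cdot o(\delta_n)=o(n\delta_n)$; in the intended application $\delta_n=\sigma_{n,1}/n$, so this is $o_P(\sigma_{n,1})$, and since $\sigma_{n,1}/\sigma_{n,2}\sim n^{\alpha_\varepsilon/2}L_0^{-1}(n)\to\infty$ this is \emph{not} $o_P(\sigma_{n,2})$. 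Lemma \ref{lem:tightness} cannot rescue this: it controls centered increments only over windows of width $\sigma_{n,2}^{-1}$, whereas $\delta_n\gg\sigma_{n,2}^{-1}$ (indeed $\delta_n\sigma_{n,2}\sim n^{1-3\alpha_\varepsilon/2}L_0^3(n)\to\infty$), and even after centering the mean part $n\,o(\delta_n)$ survives. The cancellation you need is between the indicator and its conditional expectation inside each $D_i$ — i.e.\ the martingale structure — which your triangle-inequality split $|D_i|\le\mathbf{1}_{\{\cdot\}}+\Exp[\mathbf{1}_{\{\cdot\}}\mid{\cal H}_{i-1}]$ discards. Repairing this essentially forces you back to either a Lemma \ref{lem:mtg}-type moment bound for the shifted martingale (uniform in ${\bf u}$) or the paper's bracketing trick; the case (b) claim inherits the same issue.
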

As in case of Corollary \ref{cor:LRD} we conclude the following
corollary.
\begin{cor}\label{cor:mtg}
Assume that $\|f_{\eta}\|_{\infty}<\infty$. Assume that
${\bf\Delta}$ can be written as $\Delta_0{\bf
1}+(\Delta_{01},\ldots,\Delta_{0n})$, where
$$\Delta_0=o_P(\delta_n^{1-\nu}), \qquad \max_i \Delta_{0i}=o_P(\delta_n)$$
and that {\rm (\ref{eq:cond-1a})} or {\rm (\ref{eq:cond-1b})} holds
respectively for $\alpha_{\varepsilon}<1/2$ or
$\alpha_{\varepsilon}>1/2$. Then
\begin{equation}\label{eq:mtg}
\sup_{x\in\mathbb{R}}\left|M_n(x;{\bf
\Delta})\right|=o_P(\sigma_{n,2})+O_P(\epsilon\sqrt{n}).
\end{equation}
\end{cor}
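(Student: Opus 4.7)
The plan is to derive Corollary \ref{cor:mtg} from Lemma \ref{lem:mtg-2} via a standard truncation-replacement argument: the uniform bound in Lemma \ref{lem:mtg-2} is stated over the entire \emph{deterministic} class of vectors ${\bf u}$ satisfying (\ref{eq:cond-2}), and the hypotheses on ${\bf\Delta}$ guarantee that, with probability tending to one, the random vector ${\bf\Delta}$ belongs to that class. This is exactly the same mechanism that was used to extract Corollary \ref{cor:LRD} from (\ref{eq:LRD-2}) in the LRD part.

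In detail, I would fix $\eta>0$ and use the hypothesis $\Delta_0 = o_P(\delta_n^{1-\nu})$ to pick a constant $C_\eta$ with $\pr(|\Delta_0| > C_\eta \delta_n^{1-\nu}) < \eta/2$ for all large $n$, and the hypothesis $\max_i |\Delta_{0i}| = o_P(\delta_n)$ to pick a deterministic sequence $c_n \downarrow 0$ with $\pr(\max_i |\Delta_{0i}| > c_n \delta_n) < \eta/2$. On the intersection $E_n$ of the complementary events, $\pr(E_n) \geq 1-\eta$ and the decomposition ${\bf\Delta} = \Delta_0 {\bf 1} + (\Delta_{01},\ldots,\Delta_{0n})$ lies in the admissible class (\ref{eq:cond-2}) with $u_0 = \Delta_0$ and $u_{0i} = \Delta_{0i}$. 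Hence on $E_n$,
$$
\sup_{x\in\mathbb{R}} |M_n(x; {\bf\Delta})| \leq \sup_{{\bf u}} \sup_{x\in\mathbb{R}} |M_n(x; {\bf u})|,
$$
and the right-hand side is $o_P(\sigma_{n,2}) + O_P(\epsilon\sqrt{n})$ by Lemma \ref{lem:mtg-2}, under whichever of (\ref{eq:cond-1a}) or (\ref{eq:cond-1b}) is being assumed. Since $\eta$ was arbitrary, the stochastic-order bound (\ref{eq:mtg}) follows.

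The only point that needs attention, and the main conceptual obstacle, is to observe that the $\sup_{{\bf u}}$ in Lemma \ref{lem:mtg-2} sits \emph{outside} the $o_P$ and $O_P$ symbols, so that the bound is a genuine uniform-in-${\bf u}$ probabilistic statement rather than a pointwise one. Once that is noted, the random plug-in ${\bf u}={\bf\Delta}$ is legitimate on $E_n$, and no further estimate is needed; the proof reduces to the routine probabilistic truncation performed above.
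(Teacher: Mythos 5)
Your proposal is correct and is essentially the paper's own argument: the paper disposes of this corollary with the single remark ``As in case of Corollary \ref{cor:LRD} we conclude the following corollary,'' which is precisely the replace-${\bf u}$-by-${\bf\Delta}$ mechanism you spell out, namely that with probability tending to one the random vector ${\bf\Delta}$ lies in the deterministic class (\ref{eq:cond-2}) over which Lemma \ref{lem:mtg-2} gives a uniform bound. Your explicit truncation via the event $E_n$ is just a careful writing-out of that step, so nothing is missing.
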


\textit{Proof of Lemma \ref{lem:mtg}.} We prove part (a) only. The
proof of the other part is analogous. Let
$$a_{n,i}(x)=a_i(x):=\mathbf{1}_{\{x\leq \ve_t\leq x+u_i\}}
-\Ex [ \mathbf{1}_{\{x\leq \ve_i\leq x+u_i\}} |
\mathcal{H}_{i-1}]\;,$$ so that $M_n(x,\mathbf{u})= \sum_{i=1}^n
a_i(x)$. We note that $\{M_n(x,\mathbf{u}),{\cal H}_n\}$ is a
martingale array. Thus, by the Rosenthal's inequality
\begin{eqnarray*}
\Ex|M_n(x,\mathbf{u})|^4\leq
 C \Ex\left[ \left(\sum_{i=1}^n \Ex(a_i(x)^2|\mathcal{H}_{i-1})\right)^{2}\right]+C\sum_{i=1}^n \Ex
 a^4_i(x).
\end{eqnarray*}
Furthermore, $|a_i(x)|\le 1$, so that
\begin{eqnarray}\label{eq:bound-2}
\qquad\qquad\Ex|M_n(x,\mathbf{u})|^4\leq Cn\sum_{i=1}^n
\Ex\left[\left(\Ex(a_i^2(x)|\mathcal{H}_{i-1})\right)^2\right]
+C\sum_{i=1}^n \Ex a_i^2(x).
\end{eqnarray}
Note that
$$\Ex[a_i^2(x)|\mathcal{H}_{i-1}]\leq \Ex[\mathbf{1}_{\{\ve_i\leq x+|u_i|\}} | \mathcal{H}_{i-1}]
-\Ex[\mathbf{1}_{\{ \ve_i\leq x-|u_i|\}} |
\mathcal{H}_{t-1}]=:H^+_i(x)-H_i^-(x)$$ and that for each $i$,
$H_i^+(x)$ and $H_i^-(x)$ are nondecreasing.
\smallskip\par\noindent
Introduce a partition
$\mathbb{R}=\cup_{r\in\mathbb{Z}}[x_r,x_{r+1})$. Then
$$\Ex H^+_i(x_r)=\Ex H^+_i(x_r)\cdot \sigma_{n,2}\int_{x_r}^{x_{r+1}} 1\; dx\leq   \sigma_{n,2} \Ex\left[\int_{x_r}^{x_{r+1}} H^+_i(x)\; dx\right],$$
$$\Ex H^-_i(x_r)=\Ex H^-_i(x_r)\cdot \sigma_{n,2}\int_{x_r-1}^{x_{r}} 1\; dx\geq   \sigma_{n,2} \Ex\left[\int_{x_{r-1}}^{x_{r}} H^-_i(x) dx\right].$$
Thus, for arbitrary $M$,
\begin{eqnarray*}
\lefteqn{\sum_{r=-M}^M \Ex\left[H_i^+(x_r)-H_i^-(x_r)\right]\leq
\sigma_{n,2}\sum_{r=-M}^M \Ex\left[\int_{x_r}^{x_{r+1}} H^+_i(x)\;
dx-\int_{x_{r-1}}^{x_{r}} H^-_i(x) \; dx\right]}\\
 & =&\sigma_{n,2}
\Ex\left[\int_{x_{-M}}^{x_M} (H_i^+(x)-H_i^-(x))dx +
\int_{x_{M}}^{x_{M+1}} H_i^+(x) dx -\int_{x_{-M-1}}^{x_{-M}}
H_i^-(x)dx  \right]\\
&\le &  \sigma_{n,2} \Ex\left[\int_{x_{-M}}^{x_M}
(H_i^+(x)-H_i^-(x))dx\right]+2.
\end{eqnarray*}
Note that (recall that $\xi_i=\ve_i-\eta_i$)
\begin{equation}\label{eq:H_diff_int}
H^+_i(x)-H^-_i(x)=F_\eta(x-\xi_i+|u_i|)-F_\eta(x-\xi_i-|u_i|)=\int_{-|u_i|}^{|u_i|}
f_\eta(x-\xi_{i}+y)\; dy ,
\end{equation}
and
\begin{equation}\label{eq:H_diff_int_1}
|H^+_i(x)-H^-_i(x)|\le 2|u_i|\sup_x f_{\eta}(x).
\end{equation}
Using (\ref{eq:H_diff_int}) we obtain
\begin{eqnarray}\label{eq:bound-1}
\lefteqn{\sum_{r=-M}^M \Ex\left[H_i^+(x_r)-H_i^-(x_r)\right] }\\
&\le & 1 + \sigma_{n,2}
\Ex\left[\int_{x_{-M}}^{x_M}\int_{-|u_i|}^{|u_i|}f_\eta(x-\xi_{i}+y)\;
dy
\;dx\right]\nonumber\\
&\leq& 1 + \sigma_{n,2} \Ex\left[\int_{-|u_i|}^{|u_i|}
\int_{-\infty}^{\infty} f_\eta(x+\xi_{i}+y)\; dx\; dy\right]  \nonumber\\
&=&2 + \sigma_{n,2}
 \Ex\left[\int_{-|u_i|}^{|u_i|}  1\;
dy\right]=2+2 \sigma_{n,2}|u_i|.\nonumber
\end{eqnarray}
Combining (\ref{eq:H_diff_int}), (\ref{eq:H_diff_int_1}) and
(\ref{eq:bound-1}),
\begin{equation}\label{eq:bound-3}
\sum_{r=-M}^M \Ex\left[\left(
H_i^+(x_r)-H_i^-(x_r)\right)^2\right]\leq C|u_i|+C \sigma_{n,2}
u^2_i.
\end{equation}
Also, $\Ex a_i^2(x)\le \Ex[H_i^+(x_r)-H_i^-(x_r)]$. By Markov
inequality and (\ref{eq:bound-2}),
\begin{eqnarray*}
\lefteqn{P\left(\max_r {1\over \sigma_{n,2}}
|M_n(x_r,\mathbf{u})|>1\right)\leq {1\over \sigma_{n,2}^4 }\sum_r
\Ex M_n^4(x_r,\mathbf{u})={1\over \sigma_{n,2}^4 }\sum_r
\Ex\left(\sum_{i=1}^n
a_i(x_r)\right)^4 }\\
&\leq& {1\over \sigma_{n,2}^4 }\left\{ Cn\sum_{r}\sum_{i=1}^n
\Ex\left[\left(\Ex(a_i^2(x_r)|\mathcal{H}_{i-1})\right)^2\right]
+C\sum_r\sum_{i=1}^n \Ex a_i^2(x_r)\right\}\\
&\le & {C\over \sigma_{n,2}^4 }\left\{
n\sum_{i=1}^n|u_i|+n\sigma_{n,2}\sum_{i=1}^nu_i^2+n+\sigma_{n,2}\sum_{i=1}^n|u_i|\right\}.\qquad\qquad\qquad\qquad\qquad
\end{eqnarray*}
The bound converges to 0 under the conditions (\ref{eq:cond-1a}) and
(\ref{eq:cond-2}). \hfill $\Box$
\begin{proof}[Proof of Lemma \ref{lem:tightness}]
Similarly to (\ref{eq:decomp-Zn}), $A_n(x;y)$ is decomposed as
$\tilde M_n(x;y)+\tilde N_n(x;y)$, where $\tilde M_n(x;y)$ is the
martingale part and $\tilde N_n(x;y)$ is the LRD part. We have
\begin{equation}\label{eq:3}
\tilde N_n(x;y) =
\sum_{i=1}^n\left(\Ex[1_{\{x<\varepsilon_i<y\}}|{\cal
H}_{i-1}]-(F_{\varepsilon}(y)-F_{\varepsilon}(x))\right)\le n
\|f_{\eta}+f_{\varepsilon}\|_{\infty} |y-x|.
\end{equation}
From \cite[Lemma 14]{Wu2003}, $\sup_{|y-x|\le\epsilon
n^{-1/2}}|\tilde M_n(x;y)|=O_P(\epsilon n^{-1/2})$. Therefore, the
case $\alpha_{\varepsilon}>1/2$ is proven.

Furthermore, for $\alpha_{\varepsilon}<1/2$,
$$
\sup_{|y-x|\le\sigma_{n,2}^{-1}}|\tilde M_n(x;y)|\le 2\sup_{x\in
\mathbb{R}}\left|\sum_{i=1}^n\left(1_{\{\varepsilon_i\le
x\}}-\Ex\left[1_{\{\varepsilon_i\le x\}}|{\cal
H}_{i-1}\right]\right)\right|=o_P(\sigma_{n,2}).
$$
\end{proof}

\begin{proof}[Proof of Lemma \ref{lem:mtg-2}]
We start with $\alpha_{\varepsilon}<1/2$. We can rewrite $a_i(x)$ as
follows:
$$
a_i(x)=\mathbf{1}_{\{\ve_i\leq x+u_t\}}-\mathbf{1}_{\{\ve_i\leq
x\}}-(F_\eta(x-\xi_{i}+u_i)-F_\eta(x-\xi_{i})).
$$
Let $x\in[x_r,x_{r+1})$, since $\mathbf{1}_{\{\ve_i\leq x\}}$ and
$F_\eta(x)$ are nondecreasing functions with respect to $x$ we have
\begin{eqnarray*}
\lefteqn{ a_i(x)\leq \mathbf{1}_{\{\ve_i\leq
x_{r+1}+u_i\}}-\mathbf{1}_{\{\ve_i\leq x\}}-(
F_\eta(x-\xi_{i}+u_i)-F_\eta(x_{r+1}-\xi_{i}))}\\
&=& a_i(x_{r+1})+\mathbf{1}_{\{\ve_i\leq x_{r+1}\}}
-\mathbf{1}_{\{\ve_i\leq x\}} +F_\eta(x_{r+1}-\xi_{i}+u_i) -
F_\eta(x-\xi_{i}+u_i) .
\end{eqnarray*}
Thus, recalling the definition of $A_n(x;y)$ given in
(\ref{eq:def-An}),
\begin{eqnarray*}
  \lefteqn{M_n(x,\mathbf{u})=
M_n(x_r;  \mathbf{u})+\sum_{i=1}^n\left(\mathbf{1}_{\{\ve_i\leq
x_{r+1}\}} -F_\ve(x_{r+1})-(\mathbf{1}_{\{\ve_i\leq
x\}}-F_\ve(x))\right) }\\
  & & +\sum_{i=1}^n\left(F_\eta(x_{r+1}-\xi_{i}+u_i) - F_\eta(x-\xi_{i}+u_i)\right)\qquad\qquad\qquad\qquad\\
  &=:&  M_n(x_r;\mathbf{u}) + A_n(x;x_{r+1}) + B_n(x;x_{r+1};\mathbf{u}).\\
\end{eqnarray*}


\noindent Now,
\begin{eqnarray}\label{ineq:1}
\lefteqn{ \sup_{\bf
u}\sup_{x\in\mathbb{R}}|M_n(x;\mathbf{u})|=\sup_{\bf
u}\max_{r\in\mathbb{Z} }
\sup_{x\in[x_r,x_{r+1})}|M_n(x;\mathbf{u})|\leq \sup_{\bf u} \max_r |M_n(x_r;\mathbf{u})|}\nonumber\\
&  &+\sup_{|x_1-x_2|\leq \sigma_{n,2}^{-1}}A_n(x;x_{r+1}) +\sup_{\bf
u}\max_r\max_{x\in[x_r,x_{r+1})}B_n(x;x_{r+1};\mathbf{u}).
\end{eqnarray}
On account on Lemma \ref{lem:mtg}, the first term in (\ref{ineq:1})
is $o_P(\sigma_{n,2})$. The same holds for the second part
 by Lemma \ref{lem:tightness}.
 For last term we consider Taylor expansion for $F_\eta$:
$$ F_\eta(x_{r+1}-\xi_{i}+u_i) = F_\eta(x-\xi_{i}+u_i)+ f_\eta(s)(x_{r+1}-x), $$
where $s\in[x-\xi_i+u_i,x_{r+1}-\xi_i+u_i)$. Thus, the bound on
$B_n(x;x_{r+1};\mathbf{u})$ is independent of $\mathbf{u}$
$$B_n(x;x_{r+1};\mathbf{u})=\sum_{i=1}^n f_\eta(s)(x_{r+1}-x)\leq n f_\eta  (s){1\over \sigma_{n,2}} =o(\sigma_{n,2})$$
since $ n/\sigma^2_{n,2} \to 0$ for $\alpha_\ve<1/2$.
Thus, the proof for $\alpha_{\varepsilon}<1/2$ is finished.\\

If $\alpha_{\varepsilon}>1/2$, then with the choice
$x_r=r\frac{\epsilon}{\sqrt{n}}$ the first part in (\ref{ineq:1}) is
$o_P(\sqrt{n})$ and the same holds for the second part by applying
Lemma \ref{lem:tightness}. The term $B_n(x;x_{r+1};\mathbf{u})$ is
bounded by
$$B_n(x;x_{r+1};\mathbf{u})=\sum_{i=1}^n f_\eta(s)(x_{r+1}-x)\leq n f_\eta  (s){\epsilon\over \sqrt{n}} =O(\epsilon\sqrt{n}).$$
\end{proof}

\subsection{Proofs of Theorems \ref{thm:main} and \ref{thm:main-weak}}
The result of Theorem \ref{thm:main} follows from Corollary
\ref{cor:LRD} and uniform $o_P(\sigma_{n,2})$
negligibility of the martingale part in Lemma \ref{lem:mtg-2}.\\

Now, let $\alpha_{\varepsilon}>1/2$.
Corollary \ref{cor:mtg} implies that
for each $\eta,\theta>0$ we may choose $\epsilon>0$ small enough so
that
\begin{equation}\label{eq:2}
P\left(\sup_{x\in \mathbb{R}}|n^{-1/2}
M_n(x,{\bf\Delta})|>\theta\right)<1-\eta.
\end{equation}
Recall (\ref{eq:decomposition}). This combined with (\ref{eq:LRD-3})
of Corollary \ref{cor:LRD} and (\ref{eq:2}) yields
\begin{eqnarray}\label{eq:decomposition-1}
\lefteqn{\hat K_n(x)=K_n(x)+M_n(x;{\bf\Delta})
}\nonumber\\
&&+f_{\varepsilon}(x)\sum_{i=1}^n\Delta_i-f^{(1)}_{\varepsilon}(x)\Delta_{0}\xi_{n,1}
+ O\left(\sum_{i=1}^n\Delta_i^2\right)+O_P(\delta_n^{1-\nu}
\sqrt{n})+O_P\left(\delta_n\sigma_{n,1}\right),\nonumber\\
&=& K_n(x)+f_{\varepsilon}(x)\sum_{i=1}^n\varepsilon_i +\left(f_{\varepsilon}(x)\sum_{i=1}^n\Delta_i-f_{\varepsilon}(x)\sum_{i=1}^n\varepsilon_i-f^{(1)}_{\varepsilon}(x)\Delta_{0}\xi_{n,1}\right)   \nonumber\\
&&\qquad\qquad +O\left(\sum_{i=1}^n\Delta_i^2\right)+o_P(
\sqrt{n})+O_P\left(\delta_n\sigma_{n,1}\right).
\end{eqnarray}
Application of (\ref{eq:Wu-weak}) yields
$$
n^{-1/2}\left(K_n(x)+f_{\varepsilon}(x)\sum_{i=1}^n\varepsilon_i\right)\convweak
W_1(x).
$$
The result of Theorem \ref{thm:main-weak} follows.
\subsection{Proof of {\rm (\ref{eq:delta-exp})}}
We have
\begin{equation}\label{eq:Lb}
\Exp[L_{b}(X_1,X_2)]\sim 1+\frac{O(b^2)}{2}\int s^2K(s)\; ds
 \int f^{(2)}(v)\; dv =1+O(b^2).
\end{equation}
Consequently,
$\Exp[L_{b}(X_1,X_2)]\bar\varepsilon=\bar\varepsilon+O_P(b^2\sigma_{n,1}/n)=\bar\varepsilon+o_P(\sigma_{n,1}/n)$.

Furthermore, since central moments are bounded by ordinary moments,
\begin{eqnarray*}
\lefteqn{\Var\left(\frac{1}{n}\sum_{j=1}^nL_{b}^*(X_i,X_j)\varepsilon_j\right)=
}\\
&=& \frac{O(1)}{n^2}\sum_{j=1}^n \Exp[L_b^2(X_i,X_j)]
+\frac{1}{n^2}\sum_{j,j'=1\atop j\not=j'}^n
\Exp[L_b^*(X_i,X_j)\;L_b^*(X_i,X_{j'})]\Exp[\varepsilon_j\varepsilon_{j'}]\\
\end{eqnarray*}
It is straightforward to verify that for different indices $i,j,j'$,
$$
\Exp[L_b(X_i,X_j)L_b(X_i,X_{j'})]=1+O(b).
$$
Combining this with (\ref{eq:Lb}) yields
$$
\Exp[L_b^*(X_i,X_j)\;L_b^*(X_i,X_{j'})]=o(b).
$$
Consequently, if (\ref{eq:b-cond}) holds, then uniformly in $i$,
\begin{equation}\label{eq:varDelta}
\Var\left(\frac{1}{n}\sum_{j=1}^nL_{b}^*(X_i,X_j)\varepsilon_j\right)=O((nb)^{-1})+o(b\sigma_{n,1}^2/n^2)=o(\sigma_{n,1}^2/n^2).
\end{equation}

\subsection{Proof of
{\rm (\ref{eq:sum-Delta})}}\label{sec:proof-of-sum-delta} Recall
(\ref{eq:Delta}) and (\ref{eq:density-esp}). Also, recall that once
(\ref{eq:density-esp}) is evaluated, we may replace $\hat f_b(X_i)$
with $f(X_i)$. Therefore, we have
\begin{equation}\label{eq:sumDelta}
\sum_{i=1}^n\Delta_i=O_P(nb^2)+\Exp[L_b(X_1,X_2)]\varepsilon_{n,1}+\frac{1}{n}\sum_{j=1}^n
\tilde L_b^*(X_j)\varepsilon_j ,
\end{equation}
where $\tilde L_b(X_j)=\sum_{i=1}^n\frac{1}{bf(X_i)}K_b(X_i-X_j)$
and $\tilde L_b^*(X_j)$ is its centered version. Now, the variance
of the third term in (\ref{eq:sumDelta}) is
\begin{eqnarray*}
\lefteqn{\frac{1}{n^2}\sum_{j,j'=1}^n\Exp[\varepsilon_j\varepsilon_{j'}]\Exp\left[\tilde
L_{b}^*(X_j)\; \tilde L_{b}^*(X_{j'})\right]=I_1+I_2+I_3+I_4}\\
&:=& \frac{1}{n^2b^2}\sum_{j,j'=1\atop
j\not=j'}^n\Exp[\varepsilon_j\varepsilon_{j'}]
\sum_{i=1}^n\Cov\left[\frac{1}{f(X_i)}K_b(X_i-X_j),\frac{1}{f(X_i)}K_b(X_i-X_{j'})\right]\\
&&+\frac{1}{n^2b^2}\sum_{j,j'=1\atop
j\not=j'}^n\Exp[\varepsilon_j\varepsilon_{j'}] \sum_{i,i'=1\atop
i\not=i'}^n\Cov\left[\frac{1}{f(X_i)}K_b(X_i-X_j),\frac{1}{f(X_{i'})}K_b(X_{i'}\overline{}-X_{j'})\right]\\
&&+\frac{1}{n^2b^2}\sum_{j=1}^n\Exp[\varepsilon_j^2]
\sum_{i=1}^n\Cov\left[\frac{1}{f(X_i)}K_b(X_i-X_j),\frac{1}{f(X_i)}K_b(X_i-X_{j})\right]\\
&& +\frac{1}{n^2b^2}\sum_{j=1}^n\Exp[\varepsilon_j^2]
\sum_{i,i'=1\atop
i\not=i'}^n\Cov\left[\frac{1}{f(X_i)}K_b(X_i-X_j),\frac{1}{f(X_{i'})}K_b(X_{i'}\overline{}-X_{j})\right].
\end{eqnarray*}
We start with $I_1$. We claim that
$$
I_1=
\frac{O(1)}{n^2b^2}\sigma_{n,1}^2\left(\underbrace{nb^2}_{i,j,j' \;
\mbox{\rm\tiny different}}+  \underbrace{b}_{j\not=j', i=j \;
\mbox{\rm\tiny or } i=j'} \right)=O(\sigma_{n,1}^2/n).
$$
Indeed, let us verify the case when $i,j,j'$ are different. We have
(recall (P1))
\begin{eqnarray*}
\lefteqn{\Cov\left[\frac{1}{f(X_i)}K_b(X_i-X_j),\frac{1}{f(X_i)}K_b(X_i-X_{j'})\right]}\\
&\le &
\Exp\left[\frac{1}{f(X_i)}K_b(X_i-X_j)\frac{1}{f(X_i)}K_b(X_i-X_{j'})\right]\\
&=&\int\!\!\!\int\!\!\!\int
\frac{1}{f(u)}K_b(u-v)\frac{1}{f(u)}K_b(u-v')f(v)f(v')\; du\; dv\;d
v'\\
&=& b^2\int\!\!\!\int\!\!\!\int
\frac{1}{f(u)}K(s)\frac{1}{f(u)}K(s')f(u-sb)f(u-s'b)\; du\;
ds\;ds'=O(b^2).
\end{eqnarray*}
In $I_2$, the term with all indices $i,i',j,j'$ different vanishes
(recall that we work under (P1)). The other terms are verified in
the similar way as for $I_1$, by computing expected values of
products instead of covariances. We obtain:
$$
I_2=\frac{O(1)}{n^2b^2}\sigma_{n,1}^2
\left(\underbrace{nb}_{i\not=i',j\not=j',i=j,i'\not=j'}+
\underbrace{1}_{i\not=i',j\not=j',i=j,i'=j'}
\right)=O(\sigma^2_{n,1}/(nb)).
$$
Similarly,
$$
I_3=\frac{1}{n^2b^2}\sum_{j=1}^n\Exp[\varepsilon_j^2]\left(\sum_{i=1}^nO(b)+1\right)=O(b^{-1})=o(n).
$$
Finally, for $I_4$ let us note that with $i,i',j$ different we
obtain
$$\Cov\left[\frac{1}{f(X_i)}K_b(X_i-X_j),\frac{1}{f(X_{i'})}K_b(X_{i'}\overline{}-X_{j})\right]=0.$$
Thus,
$$
I_4=\frac{O(1)}{n^2b^2}n\left(  \underbrace{nb}_{i\not=i', i=j \;
\mbox{\rm\tiny or } i'=j} \right)=O(b^{-1})=o(n).
$$
From (\ref{eq:sumDelta}), (\ref{eq:Lb}) and the above estimates we
obtain
$$
\sum_{i=1}^n\Delta_i=O_P(nb^2)+\varepsilon_{n,1}+O_P(b^2\sigma_{n,1})+O_P\left(\frac{\sigma_{n,1}}{\sqrt{nb}}\right)+o_P(\sqrt{n})
.
$$
If $\alpha_{\varepsilon}<1/2$ and (\ref{eq:b-cond-1}) holds, then
the above estimate is $o_P(\sigma_{n,2})$. Likewise, if
$\alpha_{\varepsilon}>1/2$ and (\ref{eq:b-cond-1b}) holds, then the
bound is $o_P(\sqrt{n})$. Thus, (\ref{eq:sum-Delta}) is proven.
 \hfill $\Box$

\section*{Acknowledgement}
We would like to thank Professor Shiqing Ling for sending us the
correction note. The work of the first author was supported by a
NSERC (Natural Sciences and Engineering Research Council of Canada)
grant. The work of the second author was conducted while being a
Postdoctoral Fellow at the University of Ottawa.

\end{document}